\documentclass[reqno]{amsart}

\usepackage{verbatim}
\usepackage[textsize=scriptsize]{todonotes}
\usepackage{tikz-cd}
\usepackage{etex}
\usepackage{hyperref}
\usepackage{cleveref}
\usepackage[T1]{fontenc}

\usepackage[all,cmtip]{xy}
\usepackage{multicol}

\usepackage{longtable}

\hypersetup{
   colorlinks,
   linkcolor={red},
   citecolor={blue!50!black},
   urlcolor={blue!80!black}
}

\usepackage{chemarr}
\usepackage{amssymb}
\usepackage{amsmath}
\usepackage{comment}
\usepackage{spectralsequences}
\usepackage{cleveref}
\usepackage{mathtools}
\usepackage{rotating}
\usepackage{wrapfig}
\usepackage{outlines}
\usepackage{graphicx}
\usepackage{xcolor}
\usepackage{bbm}
\usepackage{enumitem}
\usepackage{mathdots}

\theoremstyle{definition}
\newtheorem{nul}{}[section]

\newtheorem{rmk}[nul]{Remark}

\newtheorem{cnstr}[nul]{Construction}

\newtheorem*{dfn*}{Definition}
\newtheorem*{axm*}{Axiom}
\newtheorem*{ntn*}{Notation}
\newtheorem*{exm*}{Example}
\newtheorem*{exr*}{Exercise}
\newtheorem*{int*}{Intuition}
\newtheorem*{qst*}{Question}
\newtheorem*{rmk*}{Remark}

\theoremstyle{plain}

\newtheorem{thm}[nul]{Theorem}
\newtheorem{prop}[nul]{Proposition}
\newtheorem{lem}[nul]{Lemma}

\newtheorem{cnj}[nul]{Conjecture}
\newtheorem{cor}[nul]{Corollary}

\newtheorem*{thm*}{Theorem}
\newtheorem*{prop*}{Proposition}
\newtheorem*{cor*}{Corollary}
\newtheorem*{lem*}{Lemma}
\newtheorem*{cnj*}{Conjecture}

\let\oldwidetilde\widetilde
\protected\def\widetilde{\oldwidetilde}

\DeclareMathOperator{\FF}{\mathbb{F}_2}

\DeclareMathOperator{\A}{\mathcal{A}}

\DeclareMathOperator{\bS}{\mathbb{S}}

\DeclareMathOperator{\Ext}{\mathrm{Ext}}

\DeclareMathOperator{\Sq}{\mathrm{Sq}}

\newcommand{\rp}{\mathbb{RP}}

\newcommand{\toto}{\Rightarrow}
\renewcommand{\P}{\mathcal P}
\newcommand{\fr}{\mathrm{fr}}

\def\P{\mathcal{P}}

\usepackage{tikz}
\usetikzlibrary{matrix,arrows,decorations}
\usepackage{tikz-cd}
\usepackage{float}

\usepackage{adjustbox}

\let\oldtocsection=\tocsection
 
\let\oldtocsubsection=\tocsubsection
 
\let\oldtocsubsubsection=\tocsubsubsection
 
\renewcommand{\tocsection}[2]{\hspace{0em}\oldtocsection{#1}{#2}}
\renewcommand{\tocsubsection}[2]{\hspace{1em}\oldtocsubsection{#1}{#2}}
\renewcommand{\tocsubsubsection}[2]{\hspace{2em}\oldtocsubsubsection{#1}{#2}}

\usepackage[margin=1in]{geometry}
\usepackage{etoolbox}
\newtoggle{draft}

\togglefalse{draft}

\iftoggle{draft} {
\newcommand{\NB}[1]{\todo[color=gray!40]{#1}}
\newcommand{\TODO}[1]{\todo[color=red]{#1}}
}{ 
\newcommand{\NB}[1]{}
\newcommand{\TODO}[1]{}
\renewcommand{\todo}[1]{}
\renewcommand{\todo}[1]{}
}

\usepackage{tikz}
\usetikzlibrary{tikzmark}
\usepackage{amssymb}
\usepackage{array}
\usepackage{rotating}

\title{The Adams differentials on the $e$-family}

\author{Runji Li}
\address{Uiversity of California, Los Angeles, Department of Mathematics, Los Angeles, CA 90095-1555, USA}
\email{runji@math.ucla.edu}

\author{Yuxuan Li}
\address{Qiuzhen College, Tsinghua University, Beijing 100084, China}
\email{yxli22@mails.tsinghua.edu.cn}

\begin{document}

\begin{abstract}
The New Doomsday Conjecture \cite{Minami} states that, for any nonzero $\Sq^0$-family, only finitely many terms in this family survive to the $E_\infty$-page. On the Adams $1$ and $2$-line, the conjecture, which corresponds to the Hopf invariant problem and the Kervaire invariant problem, were solved by Adams \cite{Adams} and Hill-Hopkins-Ravenel \cite{HHR}, respectively. On the Adams $3$-line, Burklund and Xu \cite{BXhj3} established a family of nontrivial differentials on the $h_j^3$ family, and in particular developed the Burklund-Xu Spectral Sequence, to study the non-triviality of its target on the Adams $E_2$-page. 

In this paper, we use the Burklund-Xu Spectral Sequence to establish the non-triviality of a product on the Adams $6$-line. Combining this with Bruner's formula \cite{Bruner,BMMS}, we prove the New Doomsday Conjecture for the $e$-family on the Adams $4$-line.
\end{abstract}

\maketitle

\setcounter{tocdepth}{1}
\tableofcontents
\vbadness 5000

\section{Introduction}
\label{sec:intro}

Computing the stable homotopy groups of spheres is a central problem in stable homotopy theory. Among the various methods developed for this purpose, the Adams spectral sequence stands out as one of the most powerful tools.
\[ E_2^{s,t} = \Ext_{\mathcal A}^{s,t}(\FF,\FF) \toto \pi_{t-s}(\bS_2) \]
Here $\A$ denotes the $\mathrm{mod\ }2$ Steenrod algebra, $s$ is the homological degree, $t$ is the internal degree, and $\bS_2$ denotes the $2$-completed sphere spectrum.

On the Adams $E_2$-page, we have algebraic Steenrod operations (See Chapter IV--VI of \cite{BMMS}) acting on the cohomology of the Steenrod algebra. In particular, there is an operation
\[ \Sq^0:\Ext_{\A}^{s,t}(\FF,\FF)\to \Ext_{\A}^{s,2t}(\FF,\FF) \]
Given an element $x\in \Ext_{\A}^{s,t}(\FF,\FF)$, we obtain an infinite $\Sq^0$-family of classes $\{x, \Sq^0(x), \Sq^0(\Sq^0(x)),\dots\}$ by iterating $\Sq^0$. We say that an $\Sq^0$-family is nontrivial if all members of the family are non-zero. Computations of the Adams spectral sequence lead to the following conjecture:

\begin{cnj}[New Doomsday Conjecture, \cite{Minami}] 
\label{conj:NDC}
    For any non-trivial $\Sq^0$-family $\{a_i\}_{i\ge 0}$ on the Adams $E_2$-page, only finitely many classes survive to the $E_\infty$-page.
\end{cnj}

Given an $Sq^0$-family in which only finitely many terms survive, we may further study how the terms die. Current computations lead to the following conjecture:

\begin{cnj}[Uniform Doomsday Conjecture, \cite{IWXsurveyICM,BXhj3}] 
\label{conj:UDC}
    For any non-trivial $Sq^0$-family $\{a_i\}_{i\ge 0}$ on the Adams $E_2$-page, there exists a non-trivial $Sq^0$-family $\{b_i\}_{i\ge 0}$, an integer $k\geq 2$ and a class $c$ on the Adams $E_2$-page such that $d_k(a_i) = c \cdot b_i$ for all but finitely many $i$.
\end{cnj}

On the Adams 1-line, the only classes are $h_i\in \Ext_{\A}^{1,2^i}(\FF,\FF), i\ge 0$ (see \cite{AdamsSseq}). The class $h_i$ survives if and only if there exists a map of Hopf invariant one at degree $2^i-1$. For this family, the conjecture was solved by Adams:
\begin{thm}[Adams, \cite{Adams}]\label{Hopfd2}
    $d_2(h_j) = h_0 h_{j-1}^2$, for $j\ge 4$.
\end{thm}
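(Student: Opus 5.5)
The classical route to $d_2(h_j)=h_0h_{j-1}^2$ goes through secondary cohomology operations and the Adem relations; here I plan instead to use the algebraic Steenrod operations on the Adams $E_2$-page together with their interaction with differentials, as in Bruner's formula. The argument has three steps: compute a single base differential, propagate it along the $\Sq^0$-family, and check that the resulting targets are nonzero.

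\textbf{Base case.} I start at $j=4$ and show $d_2(h_4)=h_0h_3^2$. This is a low-dimensional computation in stem $15$. The image of $J$, together with the known order $32$ of the $2$-primary part of $\pi_{15}$, shows that $h_4$ cannot be a permanent cycle. Equivalently, no map of Hopf invariant one exists in degree $15$, which can be seen from the lifting of $h_3$ and the Toda bracket $\langle 2,\sigma,\sigma\rangle$. For degree reasons the only possible differential on $h_4$ is $d_2$, and the only candidate target in bidegree $(s,t)=(3,17)$ is $h_0h_3^2$, which is nonzero in $\Ext$ by the May spectral sequence or a minimal resolution. Hence $d_2(h_4)=h_0h_3^2$.

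\textbf{Inductive step.} Let $x=h_{j-1}$ with $d_2(x)=h_0h_{j-2}^2$. Bruner's formula \cite{Bruner,BMMS} for $d_2$ of $\Sq^i x$ in the top case, where $\Sq^0$ raises $t$ by doubling, reads
\[ d_2(\Sq^0 x) = h_0\, x^2 + \Sq^1(d_2 x) \quad (\text{up to the precise indexing}). \]
Here $\Sq^0 h_{j-1}=h_j$, and the term $h_0 x^2$ equals $h_0h_{j-1}^2$. The main obstacle is controlling the correction term $\Sq^1(d_2x)=\Sq^1(h_0h_{j-2}^2)$. By the Cartan formula it expands into products involving $\Sq^1h_0=h_0^2$ or $\Sq^0 h_0$, and the Adem-type relations in $\Ext$ should force it to vanish. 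The relevant facts are $\Sq^0h_0=h_1$, $\Sq^1h_i=h_i^2$, and the relation $h_ih_{i+1}=0$. I will check this term by term: $\Sq^1(h_0h_{j-2}^2)$ expands into terms containing $h_0h_1$-type products, which are zero. I will also need the exact statement of Bruner's formula to confirm that no extra $h_0$-multiple of a $\Sq^1$-term survives.

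\textbf{Nontriviality.} Finally I verify that $h_0h_{j-1}^2\neq 0$ in $\Ext$ for $j\ge 4$. This holds because $h_{j-1}^2$ survives to $E_2$ and is $h_0$-torsion-free at this level, as one sees in the Lambda algebra or from the known structure of the Adams $3$-line. It follows that $d_2(h_j)=h_0h_{j-1}^2$ is a nonzero differential for every $j\ge 4$.
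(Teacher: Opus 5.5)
The paper gives no proof of this statement; it is quoted from Adams, whose argument uses secondary cohomology operations. So your proposal has to stand on its own, and its central step fails as written.

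\textbf{The main gap.} The formula you use, $d_2(\Sq^0x)=h_0x^2+\Sq^1(d_2x)$, is not homogeneous. For $x=h_{j-1}\in\Ext^{1,2^{j-1}}$, the term $h_0x^2$ lies in $\Ext^{3}$. But $d_2x\in\Ext^{3}$, so $\Sq^1(d_2x)\in\Ext^{4}$, and it can never appear in $d_2(h_j)$. In Bruner's formula as quoted in the paper, the two terms are joined by $\dot{+}$, not $+$. Take $s=1$, $i=0$, $r=2$ and internal degree $t=2^{j-1}$. Then $k=1$, $v=v(2^{j-1})=1=k$ for $j\ge 2$, and $a=h_0$, so
\[ d_*h_j=\Sq^1 d_2h_{j-1}\ \dot{+}\ h_0\Sq^1h_{j-1}. \]
The first term lies three filtrations above $h_j$ and the second lies two above. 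The convention therefore gives $d_2(h_j)=h_0h_{j-1}^2$ immediately for every $j\ge 2$. This is the same specialization the paper makes for $c_j$ and $e_j$, and it is consistent for $j=2,3$ because $h_0h_1^2=h_0h_2^2=0$. So no induction, no base case and no correction term are needed. Read this way, your route via power operations is a legitimate alternative to Adams's secondary-operation proof.

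\textbf{Further errors.} Your plan for the ``main obstacle'' would also have failed on its own terms. The Cartan formula gives $\Sq^1(h_0h_{j-2}^2)=h_1\Sq^1(h_{j-2}^2)+h_0^2h_{j-1}^2=h_0^2h_{j-1}^2$. The first summand vanishes because $\Sq^1(y^2)=2\,\Sq^0y\,\Sq^1y=0$, not because of $h_0h_1=0$. Whether $h_0^2h_{j-1}^2$ vanishes is a question about $\Ext^4$ that $h_ih_{i+1}=0$ does not settle.

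The base case is flawed too, though it is now superfluous:
\begin{itemize}
\item The $2$-primary part of $\pi_{15}$ is $\mathbb{Z}/32\oplus\mathbb{Z}/2$, of order $64$, not $32$.
\item It is false that $d_2$ is the only possible differential on $h_4$ for degree reasons. The classes $d_0,h_0d_0,h_0^2d_0$ in stem $14$ are candidate targets of $d_3,d_4,d_5$.
\item The standard argument is that $2\sigma^2=0$ by graded commutativity. So the nonzero permanent cycle $h_0h_3^2$ must be hit, and only $h_4$ can hit it.
\end{itemize}

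Finally, read the nontriviality of $h_0h_{j-1}^2$ for $j\ge 4$ off the structure of the $3$-line \cite{wangExt3}. For $j-1\ge 3$, none of the relations $h_ih_{i+1}=0$, $h_ih_{i+2}^2=0$, $h_i^2h_{i+2}=h_{i+1}^3$ applies. Your remark that $h_{j-1}^2$ ``survives to $E_2$'' and is ``$h_0$-torsion-free at this level'' is not an argument.
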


The remaining classes $h_i,0\le i\le 3$ are permanent cycles, which correspond to elements $2,\eta,\nu,\sigma$ in $\pi_*(\bS_2)$. These are the elements of Hopf invariant $1$. The corresponding manifolds can be taken as $S^0, S^1, S^3, S^7$, equipped with the real, complex, quaternionic and octonionic framing, respectively.

Again by Adams \cite{AdamsSseq}, the Adams $2$-line is generated by $h_ih_j$ under the relation $h_ih_{i+1}=0$. Most of the $\Sq^0$-families admit nontrivial differentials, deduced from Theorem~\ref{Hopfd2} and the Leibniz rule. The only  remaining family is $h_j^2$. The survival of $h_j^2$ is equivalent to the existence of a framed manifold with Kervaire invariant one of dimension $2^{j+1}-2$. The New Doomsday Conjecture on the Adams $2$-line was solved by Hill-Hopkins-Ravenel:

\begin{thm}[Hill-Hopkins-Ravenel, \cite{HHR}]
    The classes $h_j^2$ support nontrivial Adams differentials for $j\ge 7$.
\end{thm}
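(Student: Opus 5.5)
The statement is the Hill--Hopkins--Ravenel theorem. Its proof lies far outside the Adams spectral sequence machinery of this paper, so my plan follows their equivariant strategy. The key reformulation is Browder's theorem: $h_j^2$ is a permanent cycle if and only if there is a framed manifold of dimension $2^{j+1}-2$ with Kervaire invariant one, equivalently an element $\theta_j\in\pi_{2^{j+1}-2}(\bS_2)$ detected by $h_j^2$. It therefore suffices to show that no $\theta_j$ exists for $j\ge 7$, i.e.\ in dimensions $n=2^{j+1}-2\ge 254$. The absence of $\theta_j$ forces $h_j^2$ to be hit or to support a differential. The second conclusion follows by checking on the $E_2$-page that there is no possible source for a differential hitting $h_j^2$; alternatively, one can take the theorem in the form ``$h_j^2$ does not survive'' and read off the differential.

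To rule out $\theta_j$, I would construct a detection spectrum $\Omega$ with three properties.
\begin{enumerate}
\item \emph{Detection:} the image of any $\theta_j$ in $\pi_*\Omega$ is nonzero.
\item \emph{Periodicity:} $\pi_*\Omega$ is $256$-periodic.
\item \emph{Gap:} $\pi_{-2}\Omega=0$.
\end{enumerate}
Together these show $\pi_{2^{j+1}-2}\Omega=0$ once $j\ge 7$, because $2^{j+1}-2\equiv -2 \pmod{256}$. This contradicts detection.

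The spectrum is $\Omega=(\Omega_{\mathbb O})^{hC_8}$, where $\Omega_{\mathbb O}=\Phi^{-1}N_{C_2}^{C_8}MU_{\mathbb R}$ is a localization of the Hill--Hopkins--Ravenel norm of real bordism, localized at a suitable norm class $D$.

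I would establish the three properties in order.
\begin{enumerate}
\item \textbf{Detection.} Use the Hurewicz map to $MU^{((C_8))}$. Compare the $C_8$ homotopy fixed point spectral sequence with the Adams--Novikov spectral sequence. Then use the Ravenel/Miller--Ravenel--Wilson computation that $\theta_j$ has Adams--Novikov representative $\beta_{2^{j-1}/2^{j-1}}$ whose image in $H^2(C_8;\pi_*MU^{((C_8))})$ is nonzero.
\item \textbf{Gap.} Prove the slice theorem: the slices of $N_{C_2}^{C_8}MU_{\mathbb R}$ are wedges of indecomposable $C_8$-spectra induced from slice cells $S^{m\rho_H}\wedge H\underline{\mathbb Z}$. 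This uses the reduction theorem, comparing the quotient by the generators with $H\underline{\mathbb Z}$ via the Quillen idempotent and the method of polynomial algebras. The gap for $\pi_{-2}$ then follows from the vanishing of $\pi_{-2}$ of the fixed points of slice cells $S^{m\rho_H}\wedge H\underline{\mathbb Z}$ with $m\ge0$, together with the geometric fixed point/Tate argument showing that homotopy fixed points agree with genuine fixed points (homotopy fixed point theorem).
\item \textbf{Periodicity.} Find explicit equivariant classes $u_{2\rho}$ and $\Delta$ in the slice spectral sequence such that a suitable power of $\Delta\,u_{2\rho}$ becomes a permanent cycle after inverting $D$. This step requires computing differentials in the slice spectral sequence via the slice differential theorem, which determines $d_r(u_{2\sigma}^{2^k})$ in terms of norms of $\bar r_i$.
\end{enumerate}

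The main obstacle is the reduction/slice theorem together with the periodicity differentials. I would attempt the reduction theorem first by induction on the group order, using geometric fixed points: $\Phi^{C_2}MU_{\mathbb R}=MO$, which is a wedge of $H\FF$'s, so the geometric fixed points of the quotient are computable. The remaining steps are then largely formal.
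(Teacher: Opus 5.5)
The paper gives no proof of this statement; it only cites Hill--Hopkins--Ravenel. Your overall architecture is exactly theirs: Browder's reformulation, the observation that $h_j^2$ cannot be hit because $\Ext_{\A}^{0,t}=0$ for $t>0$, and the Detection/Periodicity/Gap triple for $\Omega=\Omega_{\mathbb O}^{hC_8}$. However, your Gap step as written does not prove $\pi_{-2}\Omega=0$.

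You reduce it to the vanishing of $\pi_{-2}$ of fixed points of slice cells $S^{m\rho_H}\wedge H\underline{\mathbb Z}$ with $m\ge 0$. That vanishing is trivial, since these cells are connective, and they are not all the slices of $\Omega_{\mathbb O}$. Since $D\in\pi^{C_8}_{k\rho_{C_8}}$ with $k>0$, $\Omega_{\mathbb O}=D^{-1}MU^{((C_8))}$ is the colimit of $S^{-jk\rho_{C_8}}\wedge MU^{((C_8))}$. Smashing with $S^{-jk\rho_{C_8}}$ shifts slices, and $\rho_{C_8}$ restricts to $[C_8:H]\rho_H$ on $H$. Hence the slices of $\Omega_{\mathbb O}$ are $(C_8)_+\wedge_H S^{m\rho_H}\wedge H\underline{\mathbb Z}$ with $m$ ranging over \emph{all} integers. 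The negative ones cannot be avoided: if only $m\ge 0$ occurred, $\Omega$ would be connective, whereas periodicity gives $\pi_{-256}\Omega\cong\pi_0\Omega\neq 0$.

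The actual content of the Gap Theorem is the case $m<0$. There $\pi^H_i(S^{m\rho_H}\wedge H\underline{\mathbb Z})\cong\tilde H^{-i}_H(S^{|m|\rho_H};\underline{\mathbb Z})\cong\tilde H^{-i}(S^{|m|\rho_H}/H;\mathbb Z)$, and you must show this vanishes in degrees $1,2,3$. For $|m|\ge 4$ this follows because the bottom cell sits in dimension $|m|$. For $|m|=1,2,3$ it needs an explicit $H$-CW structure on $S^{|m|\rho_H}$ and a chain-level computation. It also uses that every isotropy group $H$ occurring here contains $C_2$; for $H=e$ the vanishing is false. This computation is where the window $-4<i<0$ comes from: already for $H=C_2$, $m=-2$ one gets $\pi_{-4}\neq 0$.

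A smaller gap of the same kind occurs in Detection. Nonvanishing in $H^2(C_8;\pi_*MU^{((C_8))})$ does not suffice, because inverting $D$ could kill the class. You need nonvanishing in $H^2(C_8;\pi^u_*\Omega_{\mathbb O})$, which HHR obtain by detecting through an equivariant ring map under which $D$ becomes a unit. You also need the remark that no $d_2$ can hit the class, since $\pi^u_*\Omega_{\mathbb O}$ is concentrated in even degrees. So the steps after the slice theorem are not ``largely formal''. Also, $\Phi^{-1}$ in your definition of $\Omega_{\mathbb O}$ should read $D^{-1}$.
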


However, the explicit differentials supported by $h_j^2$ remain unknown. 

The remaining elements $h_i^2, 0\le i\le 6$ are permanent cycles. (\cite{BMT} for $i=4$, \cite{BJM62,Xu} for $i=5$, \cite{LWX} for $i=6$.) The corresponding manifolds are simply the squares of those corresponding to $h_i$, for $i\le 3$. When $i=4$, the explicit manifold is constructed by Jones in \cite{Jones}, and for larger $i$, there are no known manifolds corresponding to $h_i^2$.

The Adams $3$-line is generated by $c_j\in\Ext_{\A}^{3,2^j\cdot 11}(\FF,\FF),j\ge 0$ and products of $h_i$'s under the relations (see \cite{wangExt3})
    \[h_ih_{i+1}=0, \ h_ih_{i+2}^2=0,\ h_i^2h_{i+2} = h_{i+1}^3. \]
On this line, the New Doomsday Conjecture is still open. Two nontrivial cases $c_j, h_j^3$ were solved by Bruner and Burklund-Xu:

Let's recall Bruner's formula first: we define $v(n)=8a+2^b$, if $v_2(n+1) = 4a+b$ with $0\le b \le 3$. Then the attaching map of the $n$-cell of $\rp^n$ factors through $\rp^{n-v(n)}$, but not through $\rp^{n-v(n)-1}$.

If elements $c,d_1,d_2$ are in filtrations $s, s+r_1, s+r_2$ respectively, then
\[ d_*(c) = d_1 \dot + d_2 \]
means:

\begin{alignat*}{2}
  d_{r_1}(c) &= d_1       & \quad \text{if } & r_1 < r_2, \\
  d_r(c)     &= d_1 + d_2 & \quad \text{if } & r = r_1 = r_2, \\
  d_{r_2}(c) &= d_2       & \quad \text{if } & r_1 > r_2.
\end{alignat*}

\begin{thm}[Bruner's formula, \cite{Bruner,BMMS}]

    Let $x\in E_r^{s,t-s}$. Then
    \[
    d_* Sq^i x = Sq^{i+r-1} d_r x \dot+ 
    \begin{cases} 
      0 & v > k + 1 \text{ or } 2r - 2 < v < k \\
      a x d_r x & v = k + 1 \\
      a Sq^{i+v} x & v = k \text{ or } (v < k \text{ and } v \le 10)
    \end{cases}
    \]
    where $k = s-i, v = v(t-i)$, and $a\in E_\infty^{*,v-1}$ detects a generator of $\text{Im } J$ in $\pi_{v-1}\bS$. 
\end{thm}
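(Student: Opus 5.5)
The plan is to follow the geometric approach of \cite{Bruner,BMMS}. The idea is to realize the algebraic operations $\Sq^i$ on the Adams $E_2$-page through the $H_\infty$ (quadratic) structure of $\bS$, and then to read off differentials from the cell structure of stunted projective spaces.

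First, fix a representative $x\in E_r^{s,t-s}$. Choose a map $f\colon S^{t-s}\to Y_s$ into the $s$-th stage of a minimal Adams tower $\cdots\to Y_{s+1}\to Y_s\to\cdots$ for $\bS_2$, lifting to $Y_s$ but whose composite to $Y_s/Y_{s+r}$ detects $x$. Apply the extended power $D_2=(-)^{\wedge 2}_{h\Sigma_2}$. By the Thom isomorphism/James description, $D_2 S^{n}\simeq \Sigma^{n}\rp^\infty_{n}$ with $n=t-s$. Using the $H_\infty$ multiplication $D_2\bS\to\bS$ and its compatibility with Adams filtration ($D_2 Y_s\to Y_{2s}$, with a finer filtration on $D_2$ of a tower giving maps of the cells of $\rp^\infty_n$ into specific filtrations), one checks the following. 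Restricting $\mu\circ D_2 f$ to the cell of $\Sigma^n\rp^\infty_n$ in dimension $n+(t-i)$, i.e.\ cell index $t-i$, detects $\Sq^i x$ in the algebraic sense. This recovers the identification of the $\Sq^i$ of BMMS Chapter IV with the geometric construction, and I would take it as the input.

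Second, compute $d_*\Sq^i x$ by asking how far the cell carrying $\Sq^i x$ fails to extend over the lower skeleton. There are two independent obstructions. One is internal to $x$: $f$ only exists modulo $Y_{s+r}$. Its boundary is detected by $d_rx$, and applying $D_2$ to the resulting cofiber sequence $Y_{s+r}\to Y_s\to Y_s/Y_{s+r}$ forces the same cell to hit $\Sq^{i+r-1}d_rx$. This gives the first summand of the formula, by a shift-of-filtration count analogous to the one for $\Sq^i$ itself. The other obstruction is the attaching map of the $(t-i)$-cell in $\rp^\infty_n$, which is where $v=v(t-i)$ enters. By Adams' vector-field computation this attaching map factors through the cell of index $t-i-v$ and no lower one. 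Its component on that cell is detected by the generator $a$ of $\mathrm{Im}\,J$ in stem $v-1$. Composing with $\mu\circ D_2f$ therefore produces $a$ times the class carried by the $(t-i-v)$-cell, namely $a\Sq^{i+v}x$.

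Third comes the case analysis, comparing filtrations of the two terms, which produces the $\dot+$ notation. Note that $\Sq^{j}x=0$ for $j>s$, and the bottom cell, index $n=t-s$, gives $\Sq^s x=x^2$. If $v>k+1=s-i+1$, the target cell lies below the bottom cell of $\rp^\infty_n$, so there is no contribution. If $v=k+1$, the attaching map lands on the "cell just below the bottom", which is seen by the cofiber $D_2$ of the pair $(Y_s/Y_{s+r})$ relative to its boundary. This gives the hybrid term $a\,x\,d_rx$. If $v\le k$, the attaching map hits an honest cell, giving $a\Sq^{i+v}x$. Finally, the range $2r-2<v<k$ is excluded because there the $a\Sq^{i+v}x$ term lies in a filtration already killed by, or beyond, the $d_r$ contributions. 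The restriction $v\le 10$ reflects where $a$ is known to be detected in the needed filtration.

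I expect the main obstacle to be the precise filtration bookkeeping on $D_2$ of an Adams tower. One must prove that each cell of $\Sigma^n\rp^\infty_n$ maps into exactly the predicted Adams filtration, and that the $\mathrm{Im}\,J$ attaching maps are detected in $E_\infty$ by $a$ in the stated filtration. I would first try to construct an explicit filtered model of $D_2(Y_s/Y_{s+r})$ cell by cell, as in BMMS Chapter VI, and carry out the comparison there.
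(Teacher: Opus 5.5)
The paper does not prove this theorem. It quotes it from Bruner and BMMS, Chapter VI, and only applies it to $c_j$ and $e_j$. Your outline follows the architecture of Bruner's argument there: a filtration on the extended power of an Adams resolution, the extended power of the cell carrying $x$, and vector-field attaching maps in stunted projective spaces detected by $\mathrm{Im}\,J$. However, the steps that carry the content are either misstated or skipped.

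\textbf{The set-up is wrong as written.} A class $x\in E_r$ that is not a permanent cycle is not represented by a map $S^{n}\to Y_s$, where $n=t-s$. It is represented by a map of pairs $(CS^{n-1},S^{n-1})\to(Y_s,Y_{s+r})$ whose restriction to $S^{n-1}$ represents $d_rx$.

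Since $D_2$ does not preserve cofibre sequences, you cannot ``apply $D_2$ to $Y_{s+r}\to Y_s\to Y_s/Y_{s+r}$''. The object to use is the relative extended power $D_2(CS^{n-1})/D_2(S^{n-1})\simeq\Sigma D_2S^{n-1}=\Sigma^n\rp^\infty_{n-1}$. This is $D_2S^n$ plus one extra bottom cell, the free part $\Sigma S^{n-1}\wedge S^{n-1}$, which maps to $Y_s\wedge Y_{s+r}$.

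The filtration rule is that $e_j\ltimes Y_s^{\wedge 2}$ maps into $Y_{2s-j}$. Your claim $D_2Y_s\to Y_{2s}$ would contradict $\Sq^ix$ having filtration $s+i<2s$.

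With this set-up, the relative cell of index $m=t-i$ sits in filtration $2s-k=s+i$. The boundary equivalence sends it to the cell of the same index in $D_2S^{n-1}$. That cell sits in filtration $2(s+r)-(k+1)=s+i+2r-1$ and carries $\Sq^{i+r-1}d_rx$, which is where the first term comes from.

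The attaching maps must therefore be read in $\rp^\infty_{n-1}$, not $\rp^\infty_n$. For $v>k+1$ the factorization lands below index $n-1$ and is null. For $v=k+1$ it lands on the honest free cell, which gives $a\,x\,d_rx$.

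\textbf{The genuine gap is the step ``composing with $\mu\circ D_2f$ therefore produces $a\Sq^{i+v}x$''.} Adams' theorem only controls the projection of the factored attaching map $S^{m-1}\to\rp^{m-v}_{n-1}$ onto the top cell $S^{m-v}$.

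The factorization is not unique, and it has components on the lower cells of index $m-v'$ with $v<v'\le k$. Those cells map to filtration $s+i+v'$. A component of Adams filtration $f$ on such a cell contributes in filtration $s+i+v'+f$. Once $\mathrm{AF}(a)\ge 2$, this can be $\le s+i+v+\mathrm{AF}(a)$, and then the composite is not detected by $a\,\Sq^{i+v}x$.

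So you must show the factorization can be chosen compatibly with the filtrations. The case split in the theorem records exactly where this works:
\begin{itemize}
\item If $\mathrm{AF}(a)=1$, i.e.\ $v\le 8$, it is automatic.
\item For $v=9,10$ it follows from the Adams filtrations of the few stems just above $v-1$. Already for $v=12$, $\sigma^2$ on the cell three below $m-v$ could land in exactly the filtration of $a\,\Sq^{i+v}x$.
\item For $v=k$ the only lower cell is the free one, $r$ filtrations higher. A short count shows it cannot interfere unless the first term already dominates.
\item For $v\ge 2r-1$, every such contribution lies strictly above $s+i+2r-1$, so the $\dot+$ convention discards it. That is the meaning of the case $2r-2<v<k$; nothing is being ``killed''.
\end{itemize}

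So the bound $v\le 10$ is not about where $a$ is detected. As written, your argument would give $a\,\Sq^{i+v}x$ for every $v\le k$, which is more than the theorem asserts. That is a sign that the missing step is precisely the filtration bookkeeping you defer at the end.
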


Specializing Bruner's formula to $x = c_j$ and $i=0$, we get $d_2(c_j)= h_0f_{j-1}\neq 0$, for $j\ge 2$. (See \cite{LinExt,Ext5} for the non-triviality of the class $h_0 f_{j-1}$.) The remaining classes $c_0, c_1$ are permanent cycles.

\begin{thm}[Isaksen-Wang-Xu \cite{IWX0to90} for $j=5$, Burklund-Xu \cite{BXhj3} for $j\ge 6$]

    \[d_4(h_j^3) = h_0^3 g_{j-2}\text{ for all }j\ge 5\]
\end{thm}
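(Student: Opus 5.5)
The plan is to follow the Burklund--Xu strategy. The differential is produced by Bruner's formula, and its target is then shown to be nonzero on the $E_2$-page.

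\textbf{Step 1: produce the differential.} We write $h_j^3 = h_j\cdot h_j^2$ and realize it as a Steenrod square of a lower class. We have $h_j^2 = \Sq^1(h_{j-1}^2)$ up to indexing, and $h_j^3=\Sq^0(h_{j-1}^3)$. By induction on $j$, starting from the known $d_4(h_5^3)=h_0^3g_3$ of Isaksen--Wang--Xu, apply Bruner's formula with $x=h_{j-1}^3$, $i=0$, $r=4$. The first term is
\[
\Sq^{3}\bigl(h_0^3 g_{j-3}\bigr)=\Sq^3(h_0^3)\,\Sq^0(g_{j-3})+\cdots .
\]
By the Cartan formula and $\Sq^{s}(h_0)=h_0^2$, $\Sq^{1}(h_0)=h_1$, this yields terms of the form $h_0^3 g_{j-2}$ together with correction terms involving $h_1$.

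Next we identify $k=s-i=3$ and compute $v=v(t)$ with $t=3\cdot 2^{j-1}$, so $t+1$ is odd and $v(t)=1$. This places us in the case $v<k$, $v\le 10$, which contributes the extra term $a\,\Sq^{1}x = h_0\,\Sq^1(h_{j-1}^3)$. By Cartan this is $h_0\cdot h_{j-1}^2h_j\cdot(\text{stuff})$, and it should vanish or lie in higher filtration using $h_0 h_j^2$-type relations. We must check carefully which term has lower $r$. We expect everything except $h_0^3g_{j-2}$ to cancel or land in filtration beyond $4$, giving
\[
d_{\le 4}(h_j^3)=h_0^3 g_{j-2}.
\]

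\textbf{Step 2: rule out shorter differentials.} We must exclude $d_2$ and $d_3$ on $h_j^3$. For $d_2$, the Leibniz rule gives $d_2(h_j^3)=3h_j^2\cdot h_0h_{j-1}^2=h_0h_{j-1}^2h_j^2=0$, since $h_{j-1}h_j=0$. For $d_3$, Bruner's formula gives no term with $r<4$. Alternatively, $h_j^3=h_{j-1}^2h_{j+1}$, and this product structure can be used to bound the possible targets.

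\textbf{Step 3 (the main obstacle): nontriviality of $h_0^3g_{j-2}$ in $E_2$ and in $E_4$.} The $E_2$ statement we will prove via the Burklund--Xu spectral sequence, computing $\Ext$ in a range near the $\Sq^0$-image. The idea is to filter the cobar complex by the $\Sq^0$-structure, or equivalently to compare with Ext of $\A$ mapped to the subalgebra generated by $\Sq^{2^k}$ in high degrees. Then $h_0^3g_{j-2}$ is detected by a class built from $h_0^3$ times the image of $g$, and we show no earlier class can hit it.

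It also remains to check that $h_0^3g_{j-2}$ is not killed by $d_2$ or $d_3$ before $E_4$. For this we use the vanishing lines for $h_0$-towers and the fact that the source degrees near $h_0^3g_{j-2}$ are sparse in the Burklund--Xu range, arguing by degree inspection of the $E_1$-page of that spectral sequence.

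We expect this last step to be the hardest part. It requires a concrete understanding of Ext in stems $\approx 3\cdot 2^j$, which is only available through the Burklund--Xu spectral sequence machinery.
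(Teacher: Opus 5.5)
\textbf{Step 1 has a genuine gap: Bruner's formula, applied as you propose, does not produce any $d_4$.} Take $x=h_{j-1}^3$, $i=0$, $r=4$.

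The principal term $\Sq^{3}(d_4x)=\Sq^3(h_0^3g_{j-3})$ raises Adams filtration by $3$. It therefore lies in $\Ext^{10,\,3\cdot 2^j+6}$ and can only be a value of $d_{2r-1}=d_7$. Every Cartan summand $\Sq^a(h_0^3)\Sq^{3-a}(g_{j-3})$ sits in filtration $10$; for example $\Sq^3(h_0^3)\Sq^0(g_{j-3})=h_0^6g_{j-2}$. (Note that $\Sq^0h_0=h_1$ and $\Sq^1h_0=h_0^2$; you have these swapped.) So this term can never give $h_0^3g_{j-2}\in\Ext^{7,\,3\cdot 2^j+3}$.

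The $v$-term has $v=1$ and $a=h_0$, so it is $h_0\Sq^1(h_{j-1}^3)$, a candidate value of $d_2$. It vanishes in $\Ext$, because $\Sq^1(h_{j-1}^3)=3h_j^2h_{j-1}^2=0$.

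Under the $\dot{+}$ convention the shorter term governs. So here the formula asserts exactly $d_2(h_j^3)=0$, which you already have from the Leibniz rule, and says nothing about $d_3,\dots,d_6$. Your ``we expect everything except $h_0^3g_{j-2}$ to cancel'' therefore has nothing behind it. Since the principal term always has length $2r-1=7$, no induction on $d_4(h_{j-1}^3)$ through $\Sq^0$ can yield a $d_4$ on $h_j^3$. The differential needs an input that this $E_2$-level formula does not see. Notice, for instance, that $h_0^2g_{j-2}$ lies in the same stem as $\Sq^1(h_{j-1}^3)$, two filtrations higher, and $h_0\cdot h_0^2g_{j-2}$ is exactly your target; the formula as quoted discards precisely this kind of information. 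A smaller error: $h_j^2=\Sq^1h_j=\Sq^0(h_{j-1}^2)$, whereas $\Sq^1(h_{j-1}^2)=0$.

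\textbf{Step 2 inherits the same problem.} The formula gives no information about $d_3$ here. Moreover, $h_0^2g_{j-2}\in\Ext^{6,\,3\cdot 2^j+2}$ is a nonzero class (since $h_0^3g_{j-2}$ is nonzero) sitting exactly in the target position of $d_3(h_j^3)$. So $d_3$ has to be excluded by an actual argument.

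\textbf{On Step 3 and the paper.} The paper does not prove this theorem. It cites Isaksen--Wang--Xu for $j=5$ and Burklund--Xu for $j\ge 6$, noting only that Burklund--Xu built the BXSS to show the target is nonzero on the Adams $E_2$-page. Your Step 3 aims at that part, but you misdescribe the tool. The BXSS is not a filtration by $\Sq^0$-structure, nor a comparison with a subalgebra generated by the $\Sq^{2^k}$. It is the filtration $q_i\mapsto i$ on $\Ext_{\mathcal Q}(\FF,\FF)=\FF[q_0,q_1,\dots]$ in the Cartan--Eilenberg spectral sequence for $\P\to\A\to\mathcal Q$. There the natural representative of the target is $q_0^3g_{j-3}$, just as $h_0x_n$ is represented by $q_0x_{n-1}$ in this paper. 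Also, the BXSS computes $\Ext$, so inspecting degrees on its $E_1$-page cannot show that $h_0^3g_{j-2}$ survives to the Adams $E_4$-page. That requires knowing the Adams $d_2$ and $d_3$ on the classes of filtration $5$ and $4$ in stem $3\cdot 2^j-3$.
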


Here $g_j\in\Ext_{\A}^{4, 2^{j+2}\cdot 3}(\FF,\FF), j\ge 1$ form an $\Sq^0$-family on the Adams $4$-line. The remaining elements $h_i^3$ are permanent cycles, for $i\le 4$. (See \cite{BMT})

The remaining open cases on the Adams $3$-line are $\{h_j^2h_{j+k+1} + h_{j+1}h_{j+k}^2\}_{j\ge 0}$, for $k\ge 2$.

\begin{rmk}
    The survival of $h_i$ and $h_i^2$ corresponds to the Hopf invariant problem and the Kervaire invariant problem, respectively. 
    More generally, the survival of an element of Adams filtration $n$ corresponds to the existence of a $(O,\fr)^n$-manifold (a generalization of manifolds with corners, with extra conditions on the stable normal bundle) with a certain characteristic number being nontrivial (See \cite{Laures,Miller}). From this perspective, the New Doomsday Conjecture implies that for any $\Sq^0$-family of characteristic numbers, $(O,\fr)^n$-manifolds detected by a member of this family exist in only finitely many dimensions. 
\end{rmk}

\begin{thm}[Main theorem]
\label{mainthm}
    On the Adams $E_2$-page, the element $h_0 x_j$ is nontrivial, for $j\ge 3$.
\end{thm}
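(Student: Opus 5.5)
The plan is to detect $h_0x_j$ on the $E_\infty$-page of the Burklund--Xu Spectral Sequence, the same tool Burklund and Xu used to show $h_0^3g_{j-2}\neq 0$. I will use it in the following form. One filters the Lambda algebra (equivalently, a minimal resolution of $\FF$ over $\A$) by the index of the leading (leftmost) admissible generator $\lambda_n$. The associated graded is then built from shifted copies of the Ext groups of the relevant Lambda subcomplexes $\Lambda(n)$, i.e. algebraic Atiyah--Hirzebruch-type pieces over truncated projective spaces. The spectral sequence converges to $\Ext_{\A}(\FF,\FF)$. A class of $E_2$ is nonzero as soon as it has a representative whose leading piece is a nonzero permanent cycle in the associated graded that no BXSS differential can hit.

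\emph{Step 1: a Lambda representative.} I will write down an explicit representative of $x_j$, and hence of $h_0x_j$ (namely $\lambda_0\cdot x_j$ after reordering by the Lambda relations). The aim is to put it in the form
\[
\lambda_{2^{j+1}-1}\,\bar y_j+(\text{terms of lower filtration}),
\]
with $\bar y_j$ an explicit cycle in a lower stem. For this I will use that $x_j$ is the $(j-3)$-fold $\Sq^0$ iterate of a low-dimensional class, so its Lambda representative is the iterated Frobenius-type image; the $\Sq^0$ operation on Lambda is $\lambda_{n}\mapsto\lambda_{2n+1}$. The choice of leading index $2^{j+1}-1$ is a guess to be adjusted once $x_j$ is pinned down; what matters is that it is the largest index occurring. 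Multiplying by $h_0$ and commuting $\lambda_0$ past the leading generator via the admissible relations should leave a nonzero leading term in the same filtration. The relevant leading coefficient should be $h_0\bar y_j$ or a low Ext class such as $h_0^ag$ in the appropriate cell.

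\emph{Step 2: nonzero leading term.} I will show that the leading coefficient represents a nonzero class on the $E_1$-page of the BXSS. That page is a sum of Ext groups of truncated projective spaces (or of $\Lambda(n)$) in low internal degree. Their structure there is stable in $j$ after the shift by $\Sq^0$, so it can be read off from known charts, or from the algebraic Atiyah--Hirzebruch spectral sequence of $\rp^\infty_{n}$ through the relevant range. This reduces the infinite family to a finite computation independent of $j$ for $j\ge3$.

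\emph{Step 3: no differentials kill it.} This is the main obstacle. I must show the class is not the target of any BXSS differential: no element of stem one higher and filtration $5$ lying in higher BXSS filtration can hit it. I will first try a degree count. In the relevant bidegree the possible sources are products of $h_i$'s and the few known $5$-line families, and these have leading indices incompatible with $2^{j+1}-1$, except for finitely many patterns. For those remaining patterns I will compute the $d_1$ and $d_2$ differentials explicitly using Bruner-style Lambda formulas. The hoped-for conclusion is that $h_0x_j$ survives to $E_\infty$ and is therefore nonzero in $\Ext_{\A}^{6,*}(\FF,\FF)$. The base case $j=3$ can be cross-checked against machine computations of Ext.
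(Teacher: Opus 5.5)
There is a genuine gap, and it starts with the spectral sequence you chose. Filtering $\Lambda$ by the leading generator is the algebraic EHP spectral sequence: $\Lambda(m+1)/\Lambda(m)\cong\Sigma^m\Lambda(2m+1)$, so its $E_1$-terms are unstable Ext groups of odd spheres. This is not the BXSS used in the paper, and in it your Step 2 fails.

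\begin{itemize}
\item Since $\Sq^0$ acts by $\lambda_i\mapsto\lambda_{2i+1}$, suppose a representative of $x_0$ (stem $37$) has leading generator $\lambda_a$. Then $x_j$ has leading generator $\lambda_{2^j(a+1)-1}$, and its coefficient $\bar y_j$ lies in stem $2^j(41-a)-4$.
\item So the relevant $E_1$-groups are not in low internal degree, and nothing in this spectral sequence becomes independent of $j$.
\item Showing that the leading coefficient $h_0\bar y_j$ is nonzero and not hit in $\Ext_{\Lambda(2m+1)}$ amounts to showing that an $h_0$-multiple of a (now unstable) $\Sq^0$-family is nonzero in a stem growing with $j$. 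That is a problem of the same type as the theorem itself.
\end{itemize}

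Step 3, which you call the main obstacle, is left as a hope. It also misdescribes the possible sources. They are unstable $4$-line Ext groups of spheres in every higher filtration, not products of $h_i$'s and stable $5$-line families. As written, the outline does not establish $h_0x_j\neq 0$ for general $j$.

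The missing idea is the mechanism the paper uses: the Cartan--Eilenberg spectral sequence for $\P\to\A\to\mathcal Q$ with $\P=\FF[\xi_1^2,\xi_2^2,\dots]$.

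\begin{itemize}
\item Because $\Ext_{\P}(\FF,\FF)$ is $\Ext_{\A}(\FF,\FF)$ with $t$ doubled, $\Sq^0$ is built in. Concretely, $h_0x_n$ is detected by $q_0\cdot x_{n-1}$ with $x_{n-1}\in\Ext_{\P}^5$ and $q_0\in\Ext^1_{\mathcal Q}$.
\item The BXSS filters $\Ext_{\mathcal Q}=\FF[q_0,q_1,\dots]$ by $q$-index. In it, every class that could interfere is a class from Adams lines $\le 4$ (all completely known) times a $q$-monomial.
\item The $t$-degree equation $\sum 2^{e_i+1}=2^{n+5}+2^{n+3}+2^{n+1}+\cdots$ then has only finitely many solution patterns, uniformly in $n$. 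There are seven candidates in BXSS degree $(4,1,*)$, $26$ classes in CESS source degree $(2,3)$, and one in degree $(0,5)$.
\item Their fates are computed from $d_k(q_{n+k}a)=q_n\langle h_n,\dots,h_{n+k-1},a\rangle$ together with the Leibniz rule.
\item This shows that $q_0x_{n-1}$ survives the BXSS, and that the only possible CESS sources in that degree, $E_2^{2,3}$ and $E_2^{0,5}$, vanish.
\end{itemize}

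Your proposal lacks a mechanism of this kind, one that turns the infinite family into a finite check uniform in $j$.
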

Here $x_j\in \Ext_{\A}^{5,2^{j+1}\cdot 21}(\FF,\FF)$ is an $\Sq^0$-family on the Adams $5$-line (See \cite{Ext5}). The main theorem provides new data on the Adams $6$-line, which is beyond the current known range. For the remaining cases $j=0,1,2$, $h_0 x_j$ are also nontrivial, as verified using Lin's program \cite{LinGraph}. 

Specializing Bruner's formula gives a family of $d_2$-differentials:
\[ d_2(e_j) = h_0 x_{j-1},\ j\ge 2 \]
Combining with our result, we can show completely the fate of the $e_i$-family:

\begin{cor}
    For $j\ge 2$, we have the following nontrivial differentials:
    \[ d_2(e_j) = h_0 x_{j-1}\neq 0 \]
\end{cor}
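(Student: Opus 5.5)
The corollary has two halves: the formula $d_2(e_j)=h_0x_{j-1}$ for $j\ge 2$, and the fact that this target is nonzero. I will get the first from Bruner's formula and the second from the Main Theorem \ref{mainthm} together with the remark after it.

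\textbf{The formula.} Write $e_j=\Sq^0 e_{j-1}$, with $e_{j-1}\in\Ext^{4,t}_{\A}(\FF,\FF)$. Apply Bruner's formula with $x=e_{j-1}$, $i=0$ and $r=2$. Since $e_{j-1}$ is a cycle at $E_2$ (it is, for $j-1\ge 1$), the term $\Sq^{i+r-1}d_rx=\Sq^1 d_2(e_{j-1})$ involves only $d_2(e_{j-1})$. I expect that term to be zero or to lie in higher filtration, so the $\dot+$ convention makes it irrelevant for $d_2$. This has to be checked, by induction on $j$ or by a direct degree argument.

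The main term comes from the case split with $k=s-i=4$ and $v=v(t)$. Here $t$ is the internal degree of $e_{j-1}$, namely $t=2^{j-1}\cdot 38$ for $e_0$ in stem $17$ (internal degree $21$). I then compute $v_2(t+1)$ or $v_2(t)$ as the formula requires.

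\begin{itemize}
\item Because $t$ is even, $t+1$ is odd, so $v_2(t+1)=0$ and $v=1$.
\item Then $v<k$ and $v\le 10$, so we are in the third case. It gives the term $a\Sq^{1}(e_{j-1})$, where $a=h_0$ detects the generator $2\in\pi_0$ of $\mathrm{Im}\,J$.
\item This term sits in filtration $s+1+1=s+2$, hence appears as a $d_2$.
\end{itemize}

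The remaining step is to identify $\Sq^1 e_{j-1}=x_{j-1}$ in $\Ext^{5}$. I would take this identity from the known computations of algebraic Steenrod operations on the $4$-line (Lin's $\Ext^5$ computation \cite{Ext5}), and check that the degrees match: $\Sq^1$ sends $(4,t)$ to $(5,2t-1)\cdots$. It is consistent with the paper's statement that Bruner's formula specializes to exactly this family. Combining, I get $d_2(e_j)=h_0x_{j-1}$, with the first summand not interfering.

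\textbf{Nontriviality.} For $j-1\ge 3$, the Main Theorem \ref{mainthm} says $h_0x_{j-1}\ne 0$ on $E_2$. For $j-1\in\{1,2\}$, i.e.\ $j=2,3$, nonvanishing comes from Lin's program, as remarked after the Main Theorem. Since the differential is a $d_2$, a nonzero target at $E_2$ means the differential itself is nontrivial.

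The main obstacle is the bookkeeping in Bruner's formula: fixing the correct $t$ and confirming that the $\Sq^{1}d_2$ summand does not cancel the $h_0\Sq^1$ term in filtration $s+2$. I would settle this first by checking that $e_{j-1}$ is a $d_2$-cycle for small $j$ from the known Adams charts and then propagating by induction.
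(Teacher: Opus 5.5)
Your outline is the paper's argument, and it works. Bruner's formula with $x=e_{j-1}$, $i=0$, $r=2$ in the case $v=1$ gives $d_2(e_j)=h_0\Sq^1 e_{j-1}=h_0x_{j-1}$. Nonvanishing then comes from the Main Theorem for $j-1\ge 3$ and from Lin's program for $j-1\in\{1,2\}$. Several of your details are wrong, though, and the first one hides exactly where the hypothesis $j\ge 2$ is used.

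\textbf{The internal degree.} The internal degree of $e_{j-1}$ is $t=2^{j-1}\cdot 21$, not $2^{j-1}\cdot 38$, and its parity is the whole point. $t$ is even precisely when $j\ge 2$, which gives $v(t)=1$ and the term $h_0\Sq^1 e_{j-1}$. For $j=1$ one has $t=21$, so $v_2(22)=1$ and $v=2$. Bruner's formula then gives $d_3(e_1)=h_1\Sq^2 e_0=h_1t$, as in the paper's remark. With your formula for $t$, the argument would put $j=1$ in the $v=1$ case and ``prove'' $d_2(e_1)=h_0x_0$, which is false. So as written it does not use $j\ge 2$ where it must.

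\textbf{The $d_2$-cycle claim.} $e_{j-1}$ is not a $d_2$-cycle in general. We have $d_2(e_0)=h_1^2d_0$, and for $j-1\ge 2$ the very statement you are proving gives $d_2(e_{j-1})=h_0x_{j-2}\neq 0$. So the check-and-induct plan you propose for your ``main obstacle'' would fail.

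It is also unnecessary. The summand $\Sq^{i+r-1}d_rx=\Sq^1 d_2(e_{j-1})$ sits in Adams filtration $4+2+1=7$, three above $e_j$. Under the $\dot{+}$ convention it can only contribute to $d_3$. It can never cancel the filtration-$6$ term $h_0\Sq^1 e_{j-1}$. This is the ``higher filtration'' option you mention, and it settles the point immediately with no induction.

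\textbf{The degree of $\Sq^1$.} $\Sq^1$ maps $\Ext^{4,t}$ to $\Ext^{5,2t}$, not to $2t-1$. This matches $x_{j-1}\in\Ext^{5,2^{j}\cdot 21}$.

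\textbf{Identifying $\Sq^1 e_{j-1}$.} Instead of quoting tables for each $j$, you can reduce $\Sq^1 e_{j-1}=x_{j-1}$ to the single case $\Sq^1 e_0=x_0$. The Adem relation $\Sq^0\Sq^1=\Sq^1\Sq^0$ gives $\Sq^1(\Sq^0)^{j-1}e_0=(\Sq^0)^{j-1}\Sq^1 e_0$.
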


This can be seen as one of the cases of the New Doomsday Conjecture (Conjecture~\ref{conj:NDC}) and the Uniform Doomsday Conjecture (Conjecture~\ref{conj:UDC}) on the Adams $4$-line.

\begin{rmk}

In \cite{Bruner}, Bruner determined the fate of the remaining elements in this family:
\[ d_2(e_0) = h_1^2d_0\neq 0,\ d_3(e_1) = h_1t\neq 0 \]

\end{rmk}

The strategy of our proof follows Chapter 4 of \cite{BXhj3}, where we employ the following two spectral sequences, and verify case-by-case that no differential kills $h_0 x_j$. The method has also been used to determine the $\tau^n$ torsion in the first 5 lines of the $E_2$-page of the $\mathbb{C}$-motivic Adams spectral sequence, see \cite{JordanBenson}.

\begin{cnstr}[See \cite{RavenelGreenBook,BXhj3}]
    Let $\mathcal P = \FF[\xi_1^2,\xi_2^2,\dots]$ be the sub-Hopf algebra of $\mathcal A$, and $\mathcal Q \cong \mathcal A \otimes_{\mathcal P}\FF$ be the quotient Hopf algebra. The extension of Hopf algebras $\mathcal P \to \mathcal A \to \mathcal Q$ gives rise to the Cartan-Eilenberg Spectral Sequence (CESS):
    \[ E_2^{s,k,t} = \Ext^{s,t}_{\mathcal P}(\FF,\Ext^k_{\mathcal Q}(\FF,\FF))\toto \Ext^{s+k,t}_{\mathcal A}(\FF,\FF) \]
    \[ d_r:E_r^{s,k,t}\to E_r^{s+r,k-r+1,t} \]
    where $s,k$ are homological degrees and $t$ is the internal degree.
    
    Furthermore, $\mathcal Q \cong \Lambda_{\FF}[\xi_1,\xi_2,\dots]$ is the cocommutative exterior algebra generated by primitive elements $\xi_1,\xi_2,\dots$. Therefore, the cohomology can be computed as:
    \[\Ext_{\mathcal Q}^{*,*}(\FF,\FF) \cong \FF[q_0,q_1,\cdots],\] where $q_i$ corresponds to $[\xi_{i+1}]$ and has degree $(k,t)=(1,2^{i+1}-1).$
    
    Moreover, the $\mathcal P$-comodule structure on $\Ext_{\mathcal Q}^{*,*}(\FF,\FF) \cong \FF[q_0,q_1,\cdots]$ is given by
    \[\psi(q_n) = \sum_{i=0}^n \xi_{n-i}^{2^{i+1}} \otimes q_i,\]
    where $\xi_0 = 1$.
\end{cnstr}

\begin{cnstr}
Following \cite{BXhj3}, we define a filtration on $\FF[q_0,q_1,\cdots]$ by assigning filtration $i$ to $q_i$. This filtration gives rise to the Burklund-Xu Spectral Sequence (BXSS):
    \[ E_1^{s,k,t,i} = \Ext^{s,t}_{\mathcal P}(\FF,\FF)\otimes \FF[q_0,q_1,\cdots]\toto \Ext^{s,t}_{\mathcal P}(\FF,\FF[q_0,q_1,\cdots])\]
    \[ d_r:E_r^{s,k,t,i}\to E_r^{s+1,k,t,i-r} \]
    where $q_i$ has degree $(0,1,2^{i+1}-1,i)$, and elements in $\Ext_{\mathcal P}^{s,t}(\FF,\FF)$ have degree $(s,k,t,i) = (s,0,t,0)$. Recall that the $(k,t)$-degree of $q_i$ is $(1,2^{i+1}-1)$.
\end{cnstr}

In summary, we have the following diagram
$$\xymatrix{
q_0 \cdot x_{n-1} \in \ar@{|->}[d] & \Ext_{\mathcal P}^{*,*}(\FF, \FF) \otimes \FF[q_0, q_1, \cdots] \ar@{=>}[d]^{\textup{Burklund-Xu Spectral Sequence}} \\
q_0 \cdot x_{n-1} \in \ar@{|->}[d] & \Ext^{*,*}_{\mathcal P}(\FF, \FF[q_0, q_1, \cdots]) \ar@{=>}[d]^{\textup{Cartan-Eilenberg Spectral Sequence}} \\
h_0 x_n \in & \Ext_{\mathcal A}^{*,*}(\FF, \FF)
}$$
To prove the main theorem, we will show that for $n\ge 1$, the element $q_0 \cdot x_{n-1}$ survives to the $E_\infty$-page of the BXSS in Section~\ref{sec:BXSS}, and it survives to the $E_\infty$-page of the CESS in Section~\ref{sec:CESS}.

\subsection*{Acknowledgments.}
The authors would like to thank Zhouli Xu for proposing this project, and Sihao Ma for helpful conversations.

\section{The Burklund-Xu Spectral Sequence}
\label{sec:BXSS}
The following proposition is a generalization of \cite[Lemma 4.10]{BXhj3}:

\begin{prop}[Differentials in BXSS] \label{lem diff algBXSS}
    Let $a\in \Ext_\P(\FF,\FF)$, so that $\langle h_{n+i}, h_{n+i+1}, \cdots, h_{n+k-1}, a \rangle$ contains $0$ in a compatible way for $i = 1,\dots, k-1$, then $d_i(q_{n+k}\cdot a)$ vanishes for $i\le k-1$, and
    \[ d_k(q_{n+k}\cdot a) = q_n\cdot \langle h_n,h_{n+1},\cdots,h_{n+k-1},a \rangle \]
\end{prop}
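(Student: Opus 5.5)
Throughout, $h_j\in\Ext^{1,2^{j+1}}_\P(\FF,\FF)$ denotes the class of $[\xi_1^{2^{j+1}}]$ in the cobar complex of $\P$. The plan is to compute the differential directly in the cobar complex $C^*_\P(\FF[q_0,q_1,\dots])$, filtered by the $q$-filtration. First fix a cobar cycle $\alpha\in C^s_\P(\FF)$ representing $a$. The comodule coaction $\psi(q_{n+k})=\sum_{i=0}^{n+k}\xi_{n+k-i}^{2^{i+1}}\otimes q_i$ gives the cobar differential
\[ d(q_{n+k}\alpha) = \sum_{i<n+k} [\xi_{n+k-i}^{2^{i+1}}]\,\alpha\; q_i . \]
The term that drops filtration by exactly $r$ is $[\xi_r^{2^{n+k-r+1}}]\alpha\, q_{n+k-r}$. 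For $r=1$ this is $h_{n+k-1}\alpha\, q_{n+k-1}$, which is the $d_1$. The higher terms are the obstructions that must be corrected inductively by adding elements of lower filtration, exactly as in a Massey product.

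The key input is the cobar identity
\[ d[\xi_r^{2^{m}}]=\sum_{0<j<r}[\xi_{r-j}^{2^{m+j}}|\xi_j^{2^m}] \]
in $\P$, with all terms squared appropriately. It shows that the elements $[\xi_r^{2^{m}}]$ form a defining system for the matric Massey product of the classes $h_{m+r-1},\dots,h_m$; here $[\xi_1^{2^{j+1}}]$ represents $h_j$, so $\xi_{r}^{2^{m+1}}$ plays the role of $\langle h_m,\dots,h_{m+r-1}\rangle$. The induction then proceeds as follows. Using the hypothesis that $\langle h_{n+k-i},\dots,h_{n+k-1},a\rangle$ contains $0$ compatibly for $i=1,\dots,k-1$, choose cochains $\alpha_i$ with
\[ d\alpha_i=\sum_{j\le i}[\xi_j^{2^{n+k-j+1}}]\,\alpha_{i-j}, \qquad \alpha_0=\alpha. \]
This is precisely a defining system for $\langle h_{n+k-i},\dots,h_{n+k-1},a\rangle$ extended to a nullhomotopy. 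Now set
\[ z=q_{n+k}\alpha+\sum_{i=1}^{k-1} q_{n+k-i}\alpha_i . \]
Expanding $d z$ with the coaction formula, the coefficient of $q_{n+k-r}$ for $r\le k-1$ cancels by the defining equations. Hence $d_i(q_{n+k}a)=0$ for $i\le k-1$. The surviving leading coefficient, of $q_n$, is
\[ \sum_{j=1}^{k}[\xi_j^{2^{n+k-j+1}}]\,\alpha_{k-j}, \]
which is by definition a representative of $\langle h_n,\dots,h_{n+k-1},a\rangle$ built from this defining system. This gives $d_k(q_{n+k}a)=q_n\cdot\langle h_n,\dots,h_{n+k-1},a\rangle$, with indeterminacy absorbed into earlier pages.

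The main obstacle is the bookkeeping of the Massey-product conventions, together with the word ``compatibly''. One must check that the Frobenius-twisted elements $\xi_j^{2^{m}}$ supply exactly the lower-order defining-system entries required in Juxtaposition form. The cleanest route is to first prove the case $a=1$, where everything is an identity in $\P$, and then tensor with $\alpha$. One must also check that the compatibility hypothesis is exactly what lets the $\alpha_i$ for different lengths be chosen coherently. I would follow the proof of \cite[Lemma 4.10]{BXhj3}, which is the case $k=2$ (or of small length), and promote its inductive step.
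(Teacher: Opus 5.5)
Your strategy is the paper's. Your $\alpha_i$ are the paper's $r_i$. You form $z=q_{n+k}\alpha+\sum_{i=1}^{k-1}q_{n+k-i}\alpha_i$, cancel the coefficients of $q_{n+k-1},\dots,q_{n+1}$ in $dz$, and read off the $q_n$-coefficient as a Massey product representative.

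\textbf{The exponents are wrong.} The formulas you display carry the whole argument, and their Frobenius exponents are wrong, so the claimed cancellation fails as written. The correction term $q_{n+k-i+j}\alpha_{i-j}$ reaches $q_{n+k-i}$ through $\psi(q_{n+k-i+j})$, whose relevant coefficient is $\xi_j^{2^{n+k-i+1}}$. The exponent is fixed by the target index $n+k-i$, not by $j$; you took every $\xi_j$ from the expansion of $\psi(q_{n+k})$. The defining equations must read
\[ d\alpha_i=\sum_{j=1}^{i}\alpha_{i-j}\otimes\xi_j^{2^{n+k-i+1}}, \]
matching the paper's $d(r_2)=a\otimes\xi_2^{2^{n+k-1}}+r_1\otimes\xi_1^{2^{n+k-1}}$. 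The leading coefficient is then $\sum_{j=1}^{k}\alpha_{k-j}\otimes\xi_j^{2^{n+1}}$.

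Your version is not even homogeneous. For $i=2$, $[\xi_1^{2^{n+k}}]\alpha_1$ has internal degree $|\alpha|+2^{n+k+1}$, while $[\xi_2^{2^{n+k-1}}]\alpha$ has degree $|\alpha|+3\cdot 2^{n+k-1}$. It is also not a defining system for $\langle h_{n+k-i},\dots,h_{n+k-1},a\rangle$. Its $j=1$ term is always $h_{n+k-1}\alpha_{i-1}$ rather than $h_{n+k-i}\alpha_{i-1}$. Likewise your final expression puts $h_{n+k-1}$, not $h_n$, at the outer end of the bracket.

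\textbf{Two smaller points.} First, for the left coaction $\psi(q_m)=\sum_i\xi_{m-i}^{2^{i+1}}\otimes q_i$, the cobar differential inserts $\xi$ adjacent to the comodule element. That is, it produces $\alpha\otimes\xi$, as in the paper, not $[\xi]\alpha$. With your ordering and Milnor's coproduct (the one behind your identity for $d[\xi_r^{2^m}]$), one gets $d^2\neq 0$. Already for $k=2$ your $q_n$-coefficient has coboundary $[\xi_1^{2^{n+2}}|\xi_1^{2^{n+1}}|\alpha]+[\xi_1^{2^{n+1}}|\xi_1^{2^{n+2}}|\alpha]\neq 0$. So it is not a cycle on the nose. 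This check is exactly where the coproduct identity for $\xi_r$ is used.

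Second, the suggested reduction to $a=1$ is not available. Since $h_{n+k-1}\cdot 1\neq 0$, the hypothesis fails for $a=1$, and the $\alpha_i$ depend genuinely on $a$. The only $a$-independent input is the coproduct formula for $\xi_r$.

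With the exponents and ordering corrected, your argument is the paper's proof.
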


\begin{proof}
    By induction on $k$, we can assume that $d_i(q_{n+k}\cdot a)=0$, for $i = 1,2,\dots, k-1$.
    By abuse of notation, let $d$ be the cobar differential $\bar\P^{\otimes s}\otimes \FF[q_0,q_1,\cdots]\to \bar\P^{\otimes s+1}\otimes \FF[q_0,q_1,\cdots]$ as well as $\bar\P^{\otimes s}\to \bar\P^{\otimes s+1}$, where $\bar\P$ is the augmentation ideal, i.e. kernel of the map $\P\to \FF$. Then we have 
    \[ d(q_{n+k}\cdot a) = \sum_{i=0}^{n+k-1} a\otimes \xi_{n+k-i}^{2^{i+1}}\otimes q_i \]
    
    The condition of Toda bracket vanishing means that we can find classes $r_i \in \bar\P^{\otimes \mathrm{fil}(a)}$, so that
    \begin{align*}
        & d(r_1) = a\otimes \xi_1^{2^{n+k}}  \\
        & d(r_2) = a\otimes \xi_2^{2^{n+k-1}} + r_1\otimes \xi_1^{2^{n+k-1}}  \\
        & d(r_3) = a\otimes \xi_3^{2^{n+k-2}} + r_1\otimes \xi_2^{2^{n+k-2}} + r_2\otimes \xi_1^{2^{n+k-2}} \\
        & \vdots \\
        & d(r_{k-1}) = a\otimes \xi_{k-1}^{2^{n+2}} + r_1 \otimes \xi_{k-2}^{2^{n+2}} + \cdots + r_{k-2}\otimes \xi_1^{2^{n+2}} \\
    \end{align*}
    In other words, we have a diagram of Massey product:

    \[
        \begin{array}{*{13}{c}}
            a & & \xi_1^{2^{n+k}} & & \xi_1^{2^{n+k-1}} & & \cdots & & \xi_1^{2^{n+3}} & & \xi_1^{2^{n+2}} & & \xi_1^{2^{n+1}} \\
            & r_1 & & \xi_2^{2^{n+k-1}} & & \xi_2^{2^{n+k-2}} & & \cdots & & \xi_2^{2^{n+2}} & & \xi_2^{2^{n+1}} & \\
            & & r_2 & & \xi_3^{2^{n+k-2}} & & \xi_3^{2^{n+k-3}} & & \cdots & & \xi_3^{2^{n+1}} & & \\
            & & & r_3 & & \xi_4^{2^{n+k-3}} & & \ddots & & \iddots & & & \\
            & & & & \ddots & & \ddots & & \iddots & & & & \\
            & & & & & r_{k-1} & & \xi_k^{2^{n+1}} & & & & & 
        \end{array}
    \]

    Consider the triangular diagram below: 
    \begingroup
    \setlength{\arraycolsep}{1.5pt} 
    \footnotesize

    \[
    \setlength{\arraycolsep}{4pt} 
    \begin{array}{ccccccc}
        
          a\otimes \xi_1^{2^{n+k}}\otimes q_{n+k-1} 
        & a\otimes \xi_2^{2^{n+k-1}}\otimes q_{n+k-2} 
        & a\otimes \xi_3^{2^{n+k-2}}\otimes q_{n+k-3} 
        & a\otimes \xi_4^{2^{n+k-3}}\otimes q_{n+k-4} 
        & \cdots 
        & a\otimes \xi_{n+k}^2\otimes q_0 \\

        & r_1\otimes \xi_1^{2^{n+k-1}}\otimes q_{n+k-2} 
        & r_1\otimes \xi_2^{2^{n+k-2}}\otimes q_{n+k-3} 
        & r_1\otimes \xi_3^{2^{n+k-3}}\otimes q_{n+k-4} 
        & \cdots 
        & r_1\otimes \xi_{n+k-1}^{2}\otimes q_{0} \\

        & 
        & r_2\otimes \xi_1^{2^{n+k-2}}\otimes q_{n+k-3} 
        & r_2\otimes \xi_2^{2^{n+k-3}}\otimes q_{n+k-4} 
        & \cdots 
        & r_2\otimes \xi_{n+k-2}^{2}\otimes q_{0} \\

        & & & \ddots & & \vdots \\
        & & & & & 
    \end{array}
    \]
    \endgroup

    Here, the rows sum to $a\otimes d(q_{n+k}), r_1\otimes d(q_{n+k-1}),r_2\otimes d(q_{n+k-2}),\cdots$,\\
    and the columns sum to $d(r_1)\otimes q_{n+k-1}, d(r_2)\otimes q_{n+k-2}, \cdots$. So we can see that:

    \begin{align*}
        d(a\otimes q_{n+k}) + \sum_{i=1}^{k-1} d(r_i\otimes q_{n+k-i}) &= a\otimes d(q_{n+k}) + \sum_{i=1}^{k-1} r_i\otimes d(q_{n+k-i}) + \sum_{i=1}^{k-1} d(r_i)\otimes q_{n+k-i} \\
        &= (a \otimes \xi_k^{2^{n+1}} + \sum_{i=1}^{k-1}r_i\otimes \xi_{k-i}^{2^{n+1}})\otimes q_{n}
    \end{align*}
    
    The target corresponds to the element $q_n\cdot\langle h_n,h_{n+1},\cdots,h_{n+k-1}, a \rangle$ in the BXSS. Therefore, we have:
    \[ d_k(q_{n+k}\cdot a) = q_n\cdot\langle h_n,h_{n+1},\cdots,h_{n+k-1}, a \rangle \]
\end{proof}

Notice that, for degree reason, if $d_i(q_n\cdot a)=0$ for $i\le n$, then $q_n\cdot a$ is a permanent cycle.

\begin{cor}\label{lem permanent cycle BXSS}
    In the BXSS, classes of the form $q_n h_n, q_n h_{n+1}^2$ are permanent cycles for $n\geq 3$.
\end{cor}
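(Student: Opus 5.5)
The plan is to combine Proposition~\ref{lem diff algBXSS} with the remark just before the corollary. A differential $d_k$ on $q_n\cdot a$ lands in $q_{n-k}$-filtration. So it suffices to show $d_k(q_n\cdot a)=0$ for $1\le k\le n$, where $a=h_n$ or $a=h_{n+1}^2$; the remark then makes $q_n\cdot a$ a permanent cycle. I will argue by induction on $k$. Suppose $d_j(q_n\cdot a)=0$ for all $j<k$, with the relevant lower brackets containing zero compatibly. Proposition~\ref{lem diff algBXSS}, applied with $n$ replaced by $n-k$, gives
\[ d_k(q_n\cdot a)=q_{n-k}\cdot\langle h_{n-k},h_{n-k+1},\dots,h_{n-1},a\rangle . \]
So I must show this bracket contains zero compatibly. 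I will do this by showing that the group of $\Ext_\P$ in which it lives is zero. Any choice of defining system then extends, and the bracket is forced to vanish.

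Next comes the degree bookkeeping. A bracket of $k+1$ classes of total cohomological degree $k+\mathrm{fil}(a)$ has degree $\mathrm{fil}(a)+1$. Its internal degree is the sum of the internal degrees of the entries. Recall $\Ext_\P(\FF,\FF)$ is $\Ext_\A(\FF,\FF)$ with internal degrees doubled, and $h_i=[\xi_1^{2^i}]$ has $t=2^i$.

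For $a=h_n$ the target is in $\Ext_\P^{2}$ with
\[ t=(2^{n-k}+\dots+2^{n-1})+2^n=2^{n+1}-2^{n-k}. \]
For $k=1$ the bracket is just the product $h_{n-1}h_n=0$. For $k\ge2$ the binary expansion of this $t$ has $k+1\ge 3$ nonzero digits. But $\Ext^2$ is spanned by the classes $h_ih_j$, whose internal degrees $2^i+2^j$ have at most two binary digits. So the group is zero.

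For $a=h_{n+1}^2$ the target is in $\Ext_\P^3$ with
\[ t=2^{n+1}-2^{n-k}+2^{n+2}=2^{n+2}+2^{n+1}-2^{n-k}. \]
Its binary expansion is $2^{n+2}+(2^{n}+\dots+2^{n-k})$, which has $k+2\ge3$ digits. By Wang's description \cite{wangExt3}, $\Ext^3$ is spanned by products $h_ih_jh_l$, with $t$ having at most $3$ binary digits, and by the $c_i$, with $t=11\cdot 2^i$, i.e.\ exactly $3$ digits in the pattern $1011$. For $k\ge 2$ there are at least $4$ digits, so the group vanishes.

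The case $k=1$ needs a separate argument, and I expect it to be the main point of care. Here $d_1(q_n h_{n+1}^2)=q_{n-1}h_{n-1}h_{n+1}^2$, which is zero by the relation $h_ih_{i+2}^2=0$. It also matters that we work in $\Ext_\P$, so the indices are shifted by one relative to $\Ext_\A$. I will double-check this shift when applying both Adams's and Wang's descriptions: $\Ext_\P$ has no $h_0$, and all its indices start at $1$. The hypothesis $n\ge3$ should be exactly what guarantees that all the indices $n-k\ge 0$ arising for $k\le n$ give legitimate classes, and that no low-dimensional coincidence in $\Ext^2_\P$ or $\Ext^3_\P$ occurs. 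I would verify the few smallest cases by hand if the binary-digit count is borderline there.
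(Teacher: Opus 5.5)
Your plan is the paper's own: apply Proposition~\ref{lem diff algBXSS} with $n$ replaced by $n-k$, and show that the $\Ext$ group containing each bracket $\langle h_{n-k},\dots,h_{n-1},a\rangle$ vanishes, using Adams's $2$-line and Wang's $3$-line. The case $a=h_n$ is fine as written.

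The case $a=h_{n+1}^2$ has a degree error, and because of it the step ``for $k\ge 2$ there are at least $4$ digits'' fails at $k=2$. The bracket $\langle h_{n-k},\dots,h_{n-1},h_{n+1}^2\rangle$ does not contain $h_n$. Its internal degree, in the $\Ext_{\A}$ grading, is therefore
\[ (2^{n-k}+\dots+2^{n-1})+2^{n+2}=2^{n+2}+2^{n-1}+\dots+2^{n-k}, \]
not $2^{n+2}+2^{n+1}-2^{n-k}$; you carried over the $2^n$ from the $h_n$ case. This degree has $k+1$ binary digits, so at $k=2$ there are exactly three, and the digit count alone proves nothing. The repair is a direct check in degree $(3,\,2^{n+2}+2^{n-1}+2^{n-2})$:
\begin{enumerate}
\item The set $\{n+2,n-1,n-2\}$ is not of the form $\{i+3,i+1,i\}$, so no $c_i$ lives there.
\item A monomial $h_ih_jh_l$ with a repeated index has internal degree of binary weight at most $2$.
\item So the only candidate is $h_{n-2}h_{n-1}h_{n+2}$, which is $0$ by $h_ih_{i+1}=0$.
\end{enumerate}
For $k\ge 3$ there are at least $4$ digits, and your argument goes through unchanged. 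The corrected degree is exactly the one the paper writes down.

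Your separate treatment of $k=1$ is correct, and it is genuinely needed, more than the paper's wording (``the $\Ext$ group which contains our Toda bracket is $0$'') suggests. In degree $(3,\,2^{n-1}+2^{n+2})$ the group is \emph{not} zero, since it contains $h_{n-2}^2h_{n+2}\neq 0$. The vanishing there comes from the relation $h_{n-1}h_{n+1}^2=0$ applied to the product itself.

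Finally, drop the worry about an index shift. The paper names classes of $\Ext_{\P}$ by their $\Ext_{\A}$ names via Frobenius, so $h_j=[\xi_1^{2^{j+1}}]$ and $h_0$ is present. In this convention $d_1(q_{n+1}a)=q_nh_na$ holds and the $\Ext_{\A}$ relations hold with the same indices. Reindexing $h_j$ as $[\xi_1^{2^j}]$ while still using the Proposition's formula would silently change which class $q_nh_n$ denotes. Doubling degrees does not change binary digit counts, so the rest of your counting is unaffected.
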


\begin{proof}
    To apply \ref{lem diff algBXSS}, we need to show that these Toda brackets vanishes in a compatible way. We'll actually show a stronger statement: these Toda brackets vanish with no indeterminacy, for degree reason. 

    For $q_nh_n$, we need to verify that $\langle h_{n-k},h_{n-k+1},\cdots, h_n \rangle$ is $\{0\}$. This Toda bracket is in degree $(s,t) = (2,2^{n-k}+2^{n-k+1}+\cdots + 2^n)$. The corresponding $\Ext$ group is zero, as seen from the Adams $2$-line \cite{AdamsSseq}.

    For $q_nh_{n+1}^2$, we need to verify that $\langle h_{n-k},h_{n-k+1},\cdots, h_{n-1}, h_{n+1}^2 \rangle$ is $\{0\}$. This Toda bracket is in degree $(3,2^{n-k}+2^{n-k+1}+\cdots + 2^{n-1}+2^{n+2})$. The $3$-line of $\Ext_{\mathcal A}(\FF,\FF)$ is generated by (See \cite{wangExt3}) 
    \[ h_ih_jh_k \in \Ext_{\mathcal A}^{3,2^i+2^j+2^k}(\FF,\FF)\text{ and }c_i \in \Ext_{\mathcal A}^{3,2^{i+3}+2^{i+1}+2^i}(\FF,\FF)\] 
    under the relations
    \[ h_i h_{i+1}=0,\ h_ih_{i+2}^2=0,\ h_i^2h_{i+2} = h_{i+1}^3 \]  
    So the $\Ext$ group which contains our Toda bracket is $0$.
\end{proof}

Now let's go back to the explicit class $q_0 x_{n-1}$ in BXSS. The class $q_0 x_{n-1}$ is of degree \\
$(5,1,2^{n+1}\cdot 21+1, 0)$, so to show that it's not killed by any differentials, we need to list all the classes of degree
$(4,1,2^{n+1}\cdot 21+1,*)$.

\begin{lem}
    For $n\ge 3$, in degree $(4,1,2^{n+1}\cdot 21+1,*)$, the $E_1$-page of the BXSS consists of the following classes:
    \begin{enumerate}
        \item $q_0e_n$
        \item $q_{0}h_{n-1}^2h_{n+2}h_{n+4}$
        \item $q_{n-1}h_{0}h_{n-1}h_{n+2}h_{n+4}$
        \item $q_{n+2}h_{0}h_{n-1}^2h_{n+4}$
        \item $q_{n+4}h_{0}h_{n-1}^2h_{n+2}$
        \item $q_{n}h_{0}h_{n+1}^2h_{n+4}$
        \item $q_{n+2}h_{0}h_{n}h_{n+3}^2$
    \end{enumerate}
    For $n\ge 4$, classes above are distinct and nontrivial. When $n=3$, the terms $(4)$ and $(5)$ are trivial.
\end{lem}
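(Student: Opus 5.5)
The plan is a degree count: list all monomials of the right degree on the $E_1$-page of the BXSS, then decide which are nonzero using the known structure of the Adams $4$-line. Since the $k$-degree is $1$, every class in degree $(4,1,2^{n+1}\cdot 21+1,*)$ has the form $q_i\cdot y$. Here $y\in\Ext_\P^{4,t_i}(\FF,\FF)$ and
\[ t_i = 2^{n+1}\cdot 21+1-(2^{i+1}-1) = 2^{n+5}+2^{n+3}+2^{n+1}+2-2^{i+1}, \]
for $0\le i\le n+4$; larger $i$ give negative degree. I will use the standard isomorphism $\Ext_\P(\FF,\FF)\cong\Ext_\A(\FF,\FF)$, which doubles internal degrees, with the naming convention used above. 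This reduces everything to Lin's description of the Adams $4$-line \cite{LinExt}. That $4$-line is spanned by:
\begin{itemize}
\item the products $h_ah_bh_ch_d$;
\item the products $c_ah_b$;
\item the $\Sq^0$-families $d_a,e_a,f_a,g_a,p_a,D_3(a),p'_a$, each of which has an explicit internal degree of the form $2^a\cdot m$ with $m$ a fixed small odd integer.
\end{itemize}
I then go through the generator types one at a time and solve $\deg(\text{generator})=t_i$ for each $i$.

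For the indecomposable families and for $c_ah_b$, the degree has bounded binary weight and a rigid shape $2^a m$ or $2^a m+2^b$. I compare this with the binary expansion of $t_i$:
\begin{itemize}
\item For $i=0$, $t_0=2^{n+5}+2^{n+3}+2^{n+1}$, which is $2^{n+1}\cdot 21$. The only match is $e_n$, since $e$ has $m=21$ in the shifted naming. The remaining families and $c_ah_b$ are excluded because their odd parts $m$ differ from $21$, or because a weight-$3$ expansion cannot be written as $2^a m + 2^b$ with $m$ the odd part of $c$.
\item For $i\ge1$, the degree $t_i$ has a low-order term forcing an $h_0$-type contribution; the candidates are of the form $h_0c_a$ or $h_0$ times an indecomposable class. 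I rule these out by checking weight and parity, and I check that $h_0\cdot(\text{indecomposable})$ lands in the wrong degree.
\end{itemize}

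For the products of four $h$'s, I solve $2^a+2^b+2^c+2^d=t_i$ by binary arithmetic.
\begin{itemize}
\item When $i=0$ the target has binary weight $3$, so exactly one carry is needed. This yields $h_{n-1}^2h_{n+2}h_{n+4}$ together with splittings of $2^{n+3}$ or $2^{n+5}$. The latter are killed by the relations $h_jh_{j+1}=0$ and $h_jh_{j+2}^2=0$, or are identified with one another using $h_j^2h_{j+2}=h_{j+1}^3$.
\item When $i\ge1$, the degree $2-2^{i+1}$ correction forces a factor $h_0$. The remaining three $h$'s must sum to $2^{n+5}+2^{n+3}+2^{n+1}+1-2^{i+1}$, which has weight $3$ only for $i\in\{n-1,n+2,n+4\}$ and is otherwise reachable by a carry only for $i=n$ or $i=n+2$. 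Solving these cases yields exactly items (3)–(7).
\end{itemize}
I will cross-check each listed element against the relations of the $3$-line \cite{wangExt3} and the $4$-line so as not to miss equalities among monomials.

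The nontriviality and distinctness claims follow next. Distinct $q_i$ give linearly independent classes, so distinctness reduces to distinctness within a fixed $i$; the only $i$ occurring twice is $i=n+2$. Nontriviality of each $4$-fold product uses Lin's basis, and the rule I will check is that no two indices are adjacent and no pattern $h_jh_{j+2}^2$ appears. For $n\ge4$ the indices $0,n-1,n+2,n+4$ and the others are far enough apart. For $n=3$, items (4) and (5) contain $h_0h_2^2$, which is zero by $h_ih_{i+2}^2=0$; I will verify that every other item stays nonzero at $n=3$. I expect the main obstacle to be the exhaustive exclusion of the non-$h$ generators ($d,f,g,p,D_3,p'$ and $c_ah_b$) for every $i$. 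I plan to organize this by the $2$-adic valuation and binary weight of $t_i$ and treat the few small-$i$ coincidences by hand.
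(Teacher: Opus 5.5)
Your plan matches the paper's. You write the class as $q_i\cdot a$ with $a\in\Ext_\P^{4,t_i}(\FF,\FF)$, use Lin's basis of the $4$-line to exclude by degree every indecomposable except $e_n$ and every $c_ah_b$, and solve a binary carry problem for the four-fold $h$-products. Your $i=0$ analysis is right. The $i\ge 1$ enumeration, however, is where items (3)--(7) come from, and as written it is wrong in two ways.

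First, after removing the forced $h_0$ (of degree $2$ in $\Ext_\P$), the other three $h$'s must have degrees summing to $S_i=2^{n+5}+2^{n+3}+2^{n+1}-2^{i+1}$, not $S_i+1$. Your target is odd, while every $h_j\in\Ext_\P$ has even degree $2^{j+1}$, so as stated there are no solutions at all.

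Second, your weight claims are off. $S_i$ has binary weight at least $4$ for $1\le i\le n-2$, weight $3$ for $i\in\{n-1,n+1,n+3\}$, and weight $2$ for $i\in\{n,n+2,n+4\}$. Weight $3$ allows no carry. So $i=n-1$ gives (3), while $i=n+1$ and $i=n+3$ give products containing $h_nh_{n+1}$ or $h_{n+2}h_{n+3}$, hence zero. Weight $2$ forces exactly one carry, with two choices of which binary digit to split. So $i=n$ gives (6) and $h_0h_{n+2}h_{n+3}^2=0$; $i=n+2$ gives (4) and (7); and $i=n+4$ gives (5) and $h_0h_nh_{n+1}^2=0$.

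The paper avoids the split into $i=0$ and $i\ge 1$ by solving $2^{a+1}+2^{b+1}+2^{c+1}+2^{d+1}+2^{i+1}=2^{n+5}+2^{n+3}+2^{n+1}+2$ in one go. Five powers of two with total binary weight $4$ means exactly one carry. This yields the three multisets $\{0,n-1,n-1,n+2,n+4\}$, $\{0,n,n+1,n+1,n+4\}$ and $\{0,n,n+2,n+3,n+3\}$, after which one only chooses which entry is $i$.

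There are three smaller corrections.

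For $i\ge 1$ we have $t_i\equiv 2\pmod 4$. So the non-$h$ candidates you must exclude are $c_ah_0$, $c_0h_b$, and the index-zero indecomposables $e_0,p_0,D_3(0)$, whose $\Ext_\A$-degrees $21,37,65$ are odd. Your phrase ``$h_0$ times an indecomposable'' is not a $4$-line class unless it means $h_0c_a$. All of these fail because $t_i/2=2^n\cdot 21+1-2^i$ never equals the corresponding degrees.

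For distinctness, $i=0$ also occurs twice, in (1) and (2). You therefore need $e_n$ and $h_{n-1}^2h_{n+2}h_{n+4}$ to be linearly independent, which Lin's basis gives.

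Your nonvanishing rule for four-fold products omits the $4$-line relation $h_j^2h_{j+3}^2=0$ (for example $h_0^2h_3^2=0$ in stem $14$). None of the listed products contains such a factor, so the conclusion survives, but the rule as stated is not a valid criterion.
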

\begin{proof}
    A class in degree $(4,1,2^{n+1}\cdot 21+1,*)$ must be of the form $q_i\cdot a$, where 
    \[  a\in \Ext_\P^{4, 2^{n+1}\cdot 21 - 2^{i+1} + 2} \]

    Case 1: $a$ is indecomposable. Based on the structure of the Adams $4$-line from \cite{LinExt} and a careful check of degrees, the only possibility for $q_i\cdot a$ is $q_0 e_n$ which is the case $(1)$ listed above.
    
    
    Case 2: $a$ is decomposable. Then the element $a$ must be of the form $c_a h_b$ or $h_a h_b h_c h_d$. 

    In the first sub-case, computing the $t$-degree shows:
    \[ 2^{a+4} + 2^{a+2} + 2^{a+1} + 2^{b+1} + 2^{i+1} = 2^{n+5} + 2^{n+3} + 2^{n+1} + 2 \]
    which is impossible by a careful analysis.

    In the second sub-case, computing the $t$-degree shows:
    \[ 2^{a+1} + 2^{b+1} + 2^{c+1} + 2^{d+1} + 2^{i+1} = 2^{n+5} + 2^{n+3} + 2^{n+1} + 2 \]
    so the set (with multiplicity) $\{ a,b,c,d,i \}$ should be:
    \begin{itemize}
        \item $\{0,n-1,n-1,n+2,n+4\}$
        \item $\{0,n,n+1,n+1,n+4\}$
        \item $\{0,n,n+2,n+3,n+3\}$
    \end{itemize}
    which gives the last $6$ classes. (The other $6$ classes are $0$, due to the relation $h_i h_{i+1} = 0$.)
\end{proof}

Now we are able to show:
\begin{thm}
    \label{surviveBXSS}
    For $n\ge 3$, the element $q_0\cdot x_{n-1}\in \Ext_\P(\FF,\FF)\otimes \FF[q_0,q_1,\cdots]$ survives to the $E_\infty$-page of the BXSS.
\end{thm}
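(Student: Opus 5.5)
Since $q_0 x_{n-1}$ sits in filtration $i=0$, it is automatically a $d_r$-cycle for every $r\ge 1$: a differential $d_r$ lowers the filtration by $r$, and the filtration is nonnegative. So the only thing to rule out is that $q_0 x_{n-1}$ is a boundary. A differential hitting it has the form $d_r(y)=q_0x_{n-1}$ with $y$ in degree $(4,1,2^{n+1}\cdot 21+1,r)$ and $r\ge1$. By the preceding Lemma, the candidate sources are the classes $(3)$--$(7)$, since $(1)$ and $(2)$ have filtration $0$ and cannot support differentials into filtration $0$. Moreover, any such $d_r$ lands in the $q_0$-component, so $y$ must have filtration exactly $r$.

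The plan is to show that each of the classes $(3)$--$(7)$ is a permanent cycle in the BXSS, or at least cannot have $q_0x_{n-1}$ as its target. I will use multiplicativity of the BXSS. The BXSS is multiplicative and $\Ext_\P(\FF,\FF)$ sits in filtration $0$ as permanent cycles (the $h_i$ are permanent). Hence each class can be written as a product of a permanent cycle with a factor known to be permanent:
\begin{itemize}
\item $(3)$: $q_{n-1}h_{n-1}\cdot h_0h_{n+2}h_{n+4}$;
\item $(4)$: $q_{n+2}\cdot h_0h_{n-1}^2h_{n+4}$, where for $n\ge 4$ I would try to rewrite it as $q_{n+2}h_{n+3}^2$-type or otherwise;
\item $(6)$: $q_nh_{n+1}^2\cdot h_0h_{n+4}$;
\item $(7)$: $q_{n+2}\cdot h_0h_nh_{n+3}^2$, which contains $q_{n+2}h_{n+3}^2$.
\end{itemize}
Corollary~\ref{lem permanent cycle BXSS} gives that $q_mh_m$ and $q_mh_{m+1}^2$ are permanent cycles for $m\ge3$. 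This directly handles $(6)$ (with $m=n\ge3$) and $(7)$ (with $m=n+2$), and handles $(3)$ when $n-1\ge3$. The Leibniz rule then gives $d_r=0$ on these products.

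The remaining cases are $(4)$, $(5)$, and $(3)$ for $n=3$. For $n=3$, classes $(4)$ and $(5)$ vanish by the Lemma. For $(5)=q_{n+4}h_0h_{n-1}^2h_{n+2}$ and $(4)=q_{n+2}h_0h_{n-1}^2h_{n+4}$ with $n\ge4$, no factor of the form $q_mh_m$ or $q_mh_{m+1}^2$ is visible. Here I would instead apply Proposition~\ref{lem diff algBXSS} directly with $a=h_0h_{n-1}^2h_{n+2}$ (respectively $h_0h_{n-1}^2h_{n+4}$). The possible targets $d_k(q_{m}a)=q_{m-k}\langle h_{m-k},\dots,h_{m-1},a\rangle$ contribute to $q_0$ only when $k=m$, and this requires all the intermediate brackets to vanish compatibly. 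For the $q_0$-target, the relevant bracket $\langle h_0,\dots,h_{m-1},a\rangle$ lies in $\Ext^{5}$ in a specific degree. I would argue by degree that it cannot equal $x_{n-1}$. Alternatively, the bracket contains a factor $h_0$ that can be juggled out, via $\langle h_0,\dots,h_{m-1},h_0b\rangle \ni h_0\langle\cdots\rangle$-type shuffling, making the target $h_0$-divisible in $\Ext_\P$, while $x_{n-1}$ is indecomposable. More simply, the earlier differentials $d_i$ for $i<m$ on these classes should already be nonzero or hit elements with $q_{\ge1}$, so they never reach $q_0$. For $(3)$ with $n=3$, I would use $q_2h_2\cdot h_0h_5h_7$ together with the relation $h_0h_2\cdot$ and a direct check of the low brackets $\langle h_0,h_1,h_2\rangle$, which is $0$ on the $2$-line.

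I expect the main obstacle to be classes $(4)$ and $(5)$, where the Corollary does not apply. The key step there is showing that the long bracket targeting filtration $0$ cannot be $x_{n-1}$. I would first try the indecomposability argument (the target lies in $h_0\cdot\Ext$ up to indeterminacy, and $x_{n-1}$ is not such), and fall back on a degree check against Lin's description of the $5$-line if that fails.
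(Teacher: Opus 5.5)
The reduction is correct. $q_0x_{n-1}$ lies in filtration $0$, so only a $d_r$ from a filtration-$r$ class among $(3)$--$(7)$ could kill it. Your handling of $(3)$, $(6)$, $(7)$ as products of permanent cycles matches the paper. Your separate check of $q_2h_2$ for $n=3$, via $h_1h_2=0$ and $\Ext^{2,7}=0$, is more careful than the paper, whose Corollary is only stated for index $\ge 3$.

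\textbf{The gap is at $(4)$ and $(5)$, which are the whole content of the paper's proof.}
\begin{itemize}
\item Trying to write $(4)$ as a product involving $q_{n+2}h_{n+3}^2$ cannot work, because $(4)$ is not a permanent cycle.
\item The degree check is vacuous. Any $d_r$ landing on $q_0x_{n-1}$ lands, by construction, in the degree of $x_{n-1}$. So $\langle h_0,\dots,h_{m-1},a\rangle$ lies in the same group $\Ext_\P^{5,2^{n+1}\cdot 21}$ as $x_{n-1}$, and degrees cannot separate them.
\item The $h_0$-juggling/indecomposability idea is unsupported. No stated juggling formula moves $h_0$ to the front. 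The indeterminacy is uncontrolled. On $E_m$ you would also be working modulo earlier boundaries.
\item More fundamentally, both arguments analyze a long differential that is never defined. The hypothesis of Proposition~\ref{lem diff algBXSS}, vanishing of the intermediate brackets, already fails at the first or second step.
\end{itemize}

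What is missing is the computation your ``more simply'' sentence only hopes for; it is exactly the paper's proof.
\begin{itemize}
\item For $(4)$, Proposition~\ref{lem diff algBXSS} with $k=1$ and the relation $h_{n-1}^2h_{n+1}=h_n^3$ give
\[ d_1(q_{n+2}h_0h_{n-1}^2h_{n+4}) = q_{n+1}h_{n+1}\cdot h_0h_{n-1}^2h_{n+4} = q_{n+1}h_0h_n^3h_{n+4}\neq 0 . \]
\item For $(5)$, $d_1$ vanishes because $h_{n+2}h_{n+3}=0$. Then, using $\langle h_{n+2},h_{n+3},h_{n+2}\rangle = h_{n+3}^2$,
\[ d_2(q_{n+4}h_0h_{n-1}^2h_{n+2}) = q_{n+2}\langle h_{n+2},h_{n+3},h_0h_{n-1}^2h_{n+2}\rangle = q_{n+2}h_0h_{n-1}^2h_{n+3}^2\neq 0 . \]
\end{itemize}
The paper asserts both targets are nonzero for $n\ge 4$.

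A class supporting a nonzero $d_1$ or $d_2$ does not survive to page $n+2$ or $n+4$, so it cannot hit filtration $0$. No linear combination escapes this either: $(7)$, the only other class in filtration $n+2$, is permanent, and $(5)$ is alone in filtration $n+4$. Without establishing these two short differentials, your argument for $(4)$ and $(5)$ is incomplete.
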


\begin{proof}
    We will compute the differentials of the terms in the lemma above, and show that none of them kills $q_0 x_{n-1}$. We begin by noticing that, using \Cref{lem permanent cycle BXSS}, the cases $(1),(2),(3),(6),(7)$ can be written as a product of two permanent cycles, hence permanent. 

    For $n\ge 4$, we need to deal with $(4)$ and (5): Using \Cref{lem diff algBXSS}, the case $(4)$ admits nontrivial $d_1$:
    \[ d_1(q_{n+2}h_0h_{n-1}^2h_{n+4}) = q_{n+1}h_0h_{n-1}^2h_{n+1}h_{n+4} = q_{n+1}h_0h_n^3h_{n+4} \neq 0 \]
    and the case $(5)$ admits nontrivial $d_2$:
    \[ d_2(q_{n+4}h_0h_{n-1}^2h_{n+2}) = q_{n+2}\langle h_{n+2}, h_{n+3}, h_0h_{n-1}^2h_{n+2} \rangle = q_{n+2} h_0 h_{n-1}^2 h_{n+3}^2\neq 0 \]

\end{proof}

\section{The Cartan-Eilenberg Spectral Sequence}
\label{sec:CESS}

The element $q_0 x_{n-1}$ in the $E_1$-page of the CESS is of degree $(s,k,t) = (5,1,2^{n+1}\cdot 21 + 1)$. We will check that the potential sources of differentials killing $q_0 x_{n-1}$ are $0$.

\begin{lem}
\label{CESS filtration 3 die}
    For $n\ge 3$, $\Ext_{\P}^{2,2^{n+1}\cdot 21+1}(\FF,\Ext_{\mathcal Q}^{3}(\FF,\FF))\cong 0$.
\end{lem}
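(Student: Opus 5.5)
The plan is to compute $\Ext_{\P}^{2,t}(\FF,\Ext^3_{\mathcal Q}(\FF,\FF))$, with $t=2^{n+1}\cdot 21+1$, using the Burklund--Xu Spectral Sequence restricted to $q$-degree $k=3$. Its $E_1$-page in this degree is $\Ext^{2,*}_\P(\FF,\FF)\otimes \FF\{q_iq_jq_l\}$. It suffices to show that this $E_1$-term, after the BXSS differentials, is zero in the relevant tridegree.

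\textbf{Step 1: enumerate the $E_1$-page.} Since $\P$ is the Steenrod algebra with degrees doubled, $\Ext_\P^{2,*}(\FF,\FF)$ is spanned by the classes $h_ah_b$ of internal degree $2^{a+1}+2^{b+1}$, subject to $h_ah_{a+1}=0$. A monomial $q_iq_jq_l$ has $t$-degree $2^{i+1}+2^{j+1}+2^{l+1}-3$. The degree condition therefore reads
\[ 2^{a+1}+2^{b+1}+2^{i+1}+2^{j+1}+2^{l+1} = 2^{n+5}+2^{n+3}+2^{n+1}+2^2 . \]
For $n\ge 3$ the right side has exactly four binary digits equal to $1$. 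Five powers of two can sum to it only if exactly one pair of equal powers merges. Writing the multiset of exponents $\{a,b,i,j,l\}$ (so the summand is $2^{e+1}$), this leaves only
\[ \{0,0,n,n+2,n+4\},\quad \{1,n-1,n-1,n+2,n+4\},\quad \{1,n,n+1,n+1,n+4\},\quad \{1,n,n+2,n+3,n+3\}. \]
For each set I would list all ways to choose two exponents for $h_ah_b$ and three for the $q$'s, discarding those with $h_ah_{a+1}=0$. This is a finite list of roughly a few dozen classes, uniform in $n\ge 3$; the smallest case $n=3$ needs separate care where indices collide.

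\textbf{Step 2: compute differentials.} Use \Cref{lem diff algBXSS} in its simplest form, $d_1(q_{m+1}\cdot a)=q_m h_m a$, together with the Leibniz rule on the polynomial part. Where needed, use $d_2$ via the Massey product $\langle h_m,h_{m+1},a\rangle$, as was done for case (5) in \Cref{surviveBXSS}. The expectation is that the $d_1$'s behave like a Koszul differential. Each set above comes with adjacent exponents ($\{n-1,n\}$-type neighbors, $\{0,1\}$, $\{n+1,n+2\}$, $\{n+3,n+4\}$). Trading a $q_{m+1}$ for $q_mh_m$ pairs off classes within the same exponent multiset. For instance, in the set $\{1,n,n+1,n+1,n+4\}$, $d_1(q_{n+1}\cdot q_1q_{n+4}h_nh_{n+1})$ hits a class with $h_n^2$ and $q_n$.

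I would organize the check set by set, exhibiting for every $E_1$-class either a nonzero differential it supports or one that hits it. The relations $h_ah_{a+1}=0$ and $h_a^2h_{a+2}=h_{a+1}^3$ should be used to verify that the targets are nonzero in $\Ext_\P^3$.

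\textbf{Main obstacle.} The hard part is bookkeeping. Some candidate $d_1$-targets vanish because of $h_ah_{a+1}=0$, so some classes may survive $E_2$ and require $d_2$ or $d_3$. For those, I would first try to identify the relevant Toda brackets $\langle h_m,h_{m+1},a\rangle$ on the Adams $3$-line via \cite{wangExt3}, as in \Cref{lem permanent cycle BXSS}, and show they are nonzero. If a genuine survivor remains, the fallback is to compute $\Ext_\P^2$ of the comodule $\FF\{q_iq_jq_l\}$ in that internal degree directly, using the explicit coaction $\psi(q_m)=\sum \xi_{m-i}^{2^{i+1}}\otimes q_i$ on the small finite subcomodule involved.
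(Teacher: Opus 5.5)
Your setup matches the paper's: the BXSS in $q$-degree $3$, the same four exponent multisets, and $d_1,d_2$ computed from Proposition~\ref{lem diff algBXSS} plus Leibniz. The gap is in how you conclude vanishing. Showing, class by class, that each $E_1$-class supports a nonzero differential does not show the $E_\infty$-term is zero. Here this matters, because the $d_1$-images of the $s=2$ classes are linearly dependent.

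For example, $q_0q_nq_{n+4}h_0h_{n+2}$ and $q_1q_{n-1}q_{n+4}h_{n-1}h_{n+2}$ each have nonzero $d_1$. But both images equal $q_0q_{n-1}q_{n+4}h_0h_{n-1}h_{n+2}$, so their sum is a $d_1$-cycle. For $n\ge 4$ the following sums are $d_1$-cycles in the same way:
\begin{itemize}
\item $q_0q_{n+2}q_{n+4}h_0h_n+q_1q_{n+2}q_{n+3}h_nh_{n+3}$;
\item $q_0q_nq_{n+2}h_0h_{n+4}+q_1q_{n-1}q_{n+2}h_{n-1}h_{n+4}+q_1q_nq_{n+1}h_{n+1}h_{n+4}$;
\item $q_{n-1}q_{n+2}q_{n+4}h_1h_{n-1}+q_nq_{n+1}q_{n+4}h_1h_{n+1}+q_nq_{n+2}q_{n+3}h_1h_{n+3}$.
\end{itemize}
These four classes survive to $E_2$ at $s=2$. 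They die only because they are $d_1$-boundaries from the $s=1$ column, namely of $q_1q_nq_{n+4}h_{n+2}$, $q_1q_{n+2}q_{n+4}h_n$, $q_1q_nq_{n+2}h_{n+4}$ and $q_nq_{n+2}q_{n+4}h_1$. Your Step~1 never enumerates degree $(1,3,t)$. As planned, you would either miss these cycles or have nothing to kill them with. You need to compute $\ker d_1$ on the whole $s=2$ span modulo the image from $s=1$, and only then treat the $d_2$'s on the resulting $E_2$.

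Some secondary points also need fixing.
\begin{itemize}
\item Your Koszul heuristic fails on squares, since $d_1(q_m^2)=0$ in characteristic $2$. So $q_{n-1}^2q_{n+4}h_1h_{n+2}$ and $q_{n+2}q_{n+3}^2h_1h_n$ are $d_1$-cycles for a reason unrelated to $h_ah_{a+1}=0$.
\item Their $d_2$ needs $d_2(q_m^2)=h_mq_{m-1}^2$, which you read off from $\psi(q_m^2)=\sum_i\xi_{m-i}^{2^{i+2}}\otimes q_i^2$. This is not literally a case of Proposition~\ref{lem diff algBXSS}.
\item For every $d_2$ you must show the target is nonzero on $E_2$, i.e.\ a $d_1$-cycle that is not a $d_1$-boundary. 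Being nonzero in $\Ext^3_\P\otimes\FF[q]$ is not enough.
\item Your sample class $q_1q_{n+1}q_{n+4}h_nh_{n+1}$ is zero, because $h_nh_{n+1}=0$.
\item $d_1$ does not pair classes within one exponent multiset. It raises $s$ by one and replaces an exponent $m+1$ by $m,m$, so the partners lie outside your list.
\end{itemize}
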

\begin{proof}
    Let's consider elements in the $E_1$-page of the BXSS of degree $(2,3,2^{n+1}\cdot 21 +1,*)$: Such an element must be of the form $q_aq_bq_ch_ih_j$. Computing the $t$-degree, we get:
    \[ 2^{a+1} + 2^{b+1} + 2^{c+1} + 2^{i+1} + 2^{j+1} = 2^{n+5} + 2^{n+3} + 2^{n+1} + 2^2 \]
    Therefore, the set (with multiplicity) $\{ a,b,c,i,j \}$ must be one of the following:
    \begin{itemize}
        \item $\{0, 0, n, n+2, n+4\}$
        \item $\{1, n-1, n-1, n+2, n+4\}$
        \item $\{1, n, n+1, n+1, n+4\}$
        \item $\{1, n, n+2, n+3, n+3\}$
    \end{itemize}
    Therefore, the elements listed by degrees are: 
    
    \begin{multicols}{4}
    \begin{enumerate}
    \item $q_0^2 q_n h_{n+2} h_{n+4}$
    \item $q_0^2 q_{n+2} h_n h_{n+4}$
    \item $q_0^2 q_{n+4} h_n h_{n+2}$
    \item $q_0 q_n q_{n+4} h_0 h_{n+2}$
    \item $q_0 q_n q_{n+2} h_0 h_{n+4}$
    \item $q_0 q_{n+2} q_{n+4} h_0 h_n$
    \item $q_n q_{n+2} q_{n+4} h_0^2$
    \item $q_1 q_{n-1}^2 h_{n+2} h_{n+4}$
    \item $q_1 q_{n-1} q_{n+2} h_{n-1} h_{n+4}$
    \item $q_{n-1}^2 q_{n+2} h_1 h_{n+4}$
    \item $q_1 q_{n-1} q_{n+4} h_{n-1} h_{n+2}$
    \item $q_{n-1}^2 q_{n+4} h_1 h_{n+2}$
    \item $q_1 q_{n+2} q_{n+4} h_{n-1}^2$
    \item $q_{n-1} q_{n+2} q_{n+4} h_1 h_{n-1}$
    \item $q_1 q_n q_{n+1} h_{n+1} h_{n+4}$
    \item $q_1 q_{n+1}^2 h_n h_{n+4}$
    \item $q_n q_{n+1}^2 h_1 h_{n+4}$
    \item $q_1 q_n q_{n+4} h_{n+1}^2$
    \item $q_n q_{n+1} q_{n+4} h_1 h_{n+1}$
    \item $q_{n+1}^2 q_{n+4} h_1 h_n$
    \item $q_1 q_n q_{n+2} h_{n+3}^2$
    \item $q_1 q_{n+2} q_{n+3} h_n h_{n+3}$
    \item $q_n q_{n+2} q_{n+3} h_1 h_{n+3}$
    \item $q_1 q_{n+3}^2 h_n h_{n+2}$
    \item $q_n q_{n+3}^2 h_1 h_{n+2}$
    \item $q_{n+2} q_{n+3}^2 h_1 h_n$
    \end{enumerate}
    \end{multicols}

    For $n\ge 4$, all of the elements above are nontrivial and distinct. 



    None of the elements survive to the CESS $E_1$-page. Here are the differentials listed by degrees: 
    
    (Those ``$\neq 0$'' holds for all $n\ge 4$. When $n=3$ we need some more computations.)

\begin{enumerate}
    \item  $d_1(q_0^2 q_n h_{n+2} h_{n+4}) = q_0^2 q_{n-1} h_{n-1} h_{n+2} h_{n+4} \neq 0$
    \item  $d_1(q_0^2 q_{n+2} h_n h_{n+4}) = q_0^2 q_{n+1} h_n h_{n+1} h_{n+4} = 0$ \\
    $d_2(q_0^2 q_{n+2} h_n h_{n+4}) = q_0^2 q_n \langle h_n, h_{n+1}, h_n \rangle h_{n+4} = q_0^2 q_n h_{n+1}^2 h_{n+4} \neq 0$
    \item  $d_1(q_0^2 q_{n+4} h_n h_{n+2}) = q_0^2 q_{n+3} h_n h_{n+2} h_{n+3} = 0 $  \\
    $d_2(q_0^2 q_{n+4} h_n h_{n+2}) = q_0^2 q_{n+2} \langle h_{n+2}, h_{n+3}, h_{n+2} \rangle h_n = q_0^2 q_{n+2} h_n h_{n+3}^2 \neq 0$
    \textcolor{blue}{
    \item  $d_1(q_0 q_n q_{n+4} h_0 h_{n+2}) = q_0 q_{n-1} q_{n+4} h_0 h_{n-1} h_{n+2} \neq 0$
    }
    \textcolor{blue}{
    \item  $d_1(q_0 q_n q_{n+2} h_0 h_{n+4}) = q_0 q_{n-1} q_{n+2} h_0 h_{n-1} h_{n+4} + q_0 q_n q_{n+1} h_0 h_{n+1} h_{n+4} \neq 0$
    }
    \textcolor{blue}{
    \item  $d_1(q_0q_{n+2}q_{n+4}h_0h_n) = q_0q_{n+2}q_{n+3}h_0h_nh_{n+3}\not=0$
    }
    
   \item  $d_1(q_n q_{n+2} q_{n+4} h_0^2) = q_{n-1} q_{n+2} q_{n+4} h_0^2 h_{n-1} + q_n q_{n+1} q_{n+4} h_0^2 h_{n+1} + q_n q_{n+2} q_{n+3} h_0^2 h_{n+3} \neq 0$
   \item  $d_1(q_1 q_{n-1}^2 h_{n+2} h_{n+4}) = q_0 q_{n-1}^2 h_0 h_{n+2} h_{n+4} \neq 0$
    \textcolor{blue}{
   \item  $d_1(q_1 q_{n-1} q_{n+2} h_{n-1} h_{n+4}) = q_0 q_{n-1} q_{n+2} h_0 h_{n-1} h_{n+4} + q_1 q_{n-1} q_{n+1} h_{n-1} h_{n+1} h_{n+4} \neq 0$
    }
   \item  $d_1(q_{n-1}^2 q_{n+2} h_1 h_{n+4}) = q_{n-1}^2 q_{n+1} h_1 h_{n+1} h_{n+4} \neq 0$
    \textcolor{blue}{
   \item  $d_1(q_1 q_{n-1} q_{n+4} h_{n-1} h_{n+2}) = q_0 q_{n-1} q_{n+4} h_0 h_{n-1} h_{n+2} \neq 0$
    }
   \item  $d_1(q_{n-1}^2 q_{n+4} h_1 h_{n+2}) = 0.$  \\
    $d_2(q_{n-1}^2 q_{n+4} h_1 h_{n+2}) = q_{n-2}^2 q_{n+4} h_1 h_{n-1} h_{n+2} + q_{n-1}^2 q_{n+2} h_1 h_{n+3}^2$
   \item  $d_1(q_1q_{n+2}q_{n+4}h_{n-1}^2) = q_0q_{n+2}q_{n+4}h_0h_{n-1}^2+q_1q_{n+1}q_{n+4}h_{n-1}^2h_{n+2}+q_1q_{n+2}q_{n+3}h_{n-1}^2h_{n+3}$
    \textcolor{blue}{
   \item  $d_1(q_{n-1} q_{n+2} q_{n+4} h_1 h_{n-1}) = q_{n-1} q_{n+1} q_{n+4} h_1 h_{n-1} h_{n+1} + q_{n-1} q_{n+2} q_{n+3} h_1 h_{n-1} h_{n+3} \neq 0$
    }
    \textcolor{blue}{
   \item  $d_1(q_1 q_n q_{n+1} h_{n+1} h_{n+4}) = q_0 q_n q_{n+1} h_0 h_{n+1} h_{n+4} + q_1 q_{n-1} q_{n+1} h_{n-1} h_{n+1} h_{n+4} \neq 0$
    }
   \item  $d_1(q_1 q_{n+1}^2 h_n h_{n+4}) = q_0 q_{n+1}^2 h_0 h_n h_{n+4} \neq 0$
   \item  $d_1(q_n q_{n+1}^2 h_1 h_{n+4}) = q_{n-1} q_{n+1}^2 h_1 h_{n-1} h_{n+4} \neq 0$
   \item  $d_1(q_1 q_n q_{n+4} h_{n+1}^2) = q_0 q_n q_{n+4} h_0 h_{n+1}^2 + q_1 q_n q_{n+3} h_{n+1}^2 h_{n+3} \neq 0$
    \textcolor{blue}{
   \item  $d_1(q_n q_{n+1} q_{n+4} h_1 h_{n+1}) = q_{n-1} q_{n+1} q_{n+4} h_1 h_{n-1} h_{n+1} + q_n q_{n+1} q_{n+3} h_1 h_{n+1} h_{n+3} \neq 0$
    }
   \item  $d_1(q_{n+1}^2 q_{n+4} h_1 h_n) = q_{n+1}^2 q_{n+3} h_1 h_n h_{n+3} \neq 0$
    \item  $d_1(q_1 q_n q_{n+2} h_{n+3}^2) = q_0 q_n q_{n+2} h_0 h_{n+3}^2 + q_1 q_{n-1} q_{n+2} h_{n-1} h_{n+3}^2 \neq 0$
    \textcolor{blue}{
   \item  $d_1(q_1 q_{n+2} q_{n+3} h_n h_{n+3}) = q_0 q_{n+2} q_{n+3} h_0 h_n h_{n+3} \neq 0$
    }
    \textcolor{blue}{
   \item  $d_1(q_n q_{n+2} q_{n+3} h_1 h_{n+3}) = q_{n-1} q_{n+2} q_{n+3} h_1 h_{n-1} h_{n+3} + q_n q_{n+1} q_{n+3} h_1 h_{n+1} h_{n+3} \neq 0$
    }
   \item  $d_1(q_1 q_{n+3}^2 h_n h_{n+2}) = q_0q_{n+3}^2 h_0 h_n h_{n+2} \neq 0$
   \item  $d_1(q_n q_{n+3}^2 h_1 h_{n+2}) = q_{n-1} q_{n+3}^2 h_1 h_{n-1} h_{n+2} \neq 0$
   \item  $d_1(q_{n+2} q_{n+3}^2 h_1 h_n) = 0, $  \\
    $d_2(q_{n+2} q_{n+3}^2 h_1 h_n) = q_n q_{n+3}^2 h_1 h_{n+1}^2 + q_{n+2}^3 h_1 h_n h_{n+3} \neq 0$
\end{enumerate}

The targets of differentials in \textcolor{blue}{blue} are not linearly independent. $(14)+(19)+(23)$, $(6)+(22)$, $(4)+(11)$, and $(5)+(9)+(15)$ has $d_1 = 0$. To fix this, we need four more differentials:
\begin{itemize}
    \item 
    $q_0 q_n q_{n+2} h_0 h_{n+4} + q_1 q_{n-1} q_{n+2}h_{n-1} h_{n+4} + q_1 q_n q_{n+1} h_{n+1} h_{n+4} = d_1( q_1 q_n q_{n+2} h_{n+4} )$
    \item 
    $q_0 q_n q_{n+4} h_0 h_{n+2} + q_1 q_{n-1} q_{n+4} h_{n-1} h_{n+2} = d_1( q_1 q_n q_{n+4} h_{n+2} )$
    \item 
    $q_0 q_{n+2} q_{n+4} h_0 h_n + q_1 q_{n+2} q_{n+3} h_n h_{n+3} = d_1( q_1 q_{n+2} q_{n+4} h_n )$
    \item 
    $q_{n-1} q_{n+2} q_{n+4} h_1 h_{n-1} + q_n q_{n+1} q_{n+4} h_1 h_{n+1} + q_n q_{n+2} q_{n+3} h_1 h_{n+3} = d_1( q_n q_{n+2} q_{n+4} h_1 )$
\end{itemize}

\begin{figure}[H]
\begin{center}
  {\renewcommand{\arraystretch}{1.1}
    \begin{tabular}{|wc{1cm}||wc{2.5cm}|p{5cm}|wc{4cm}|} \hline
      & $s=1$ & \multicolumn{1}{p{5cm}|}{\centering $s=2$} & $s=3$ \\\hline\hline
    $3n+8$  & & $(26)\ q_{n+2} q_{n+3}^2 h_1 h_n$ \tikzmark{26} &  \\\hline
    $3n+7$  & & & \\\hline          
    $3n+6$  & &$(7)\ q_n q_{n+2} q_{n+4} h_0^2$ \tikzmark{7} &\tikzmark{26t2}$q_{n+2}^3 h_1 h_n h_{n+3}$    \\
            & &$(20)\ q_{n+1}^2 q_{n+4} h_1 h_n$ \tikzmark{20}& \tikzmark{26t1}$q_n q_{n+3}^2 h_1 h_{n+1}^2$ \\
            &$q_n q_{n+2} q_{n+4} h_1$\tikzmark{g4} &$(25)\ q_n q_{n+3}^2 h_1 h_{n+2}$\tikzmark{25}&   \\\hline
    $3n+5$  & & &\tikzmark{7t1}$q_{n-1} q_{n+2} q_{n+4} h_0^2 h_{n-1}$  \\
            & & &\tikzmark{7t2} $q_n q_{n+1} q_{n+4} h_0^2 h_{n+1}$  \\
            & &\tikzmark{g4t1}$(14)\ q_{n-1} q_{n+2} q_{n+4} h_1 h_{n-1}\tikzmark{14}$ &\tikzmark{7t3} $q_n q_{n+2} q_{n+3} h_0^2 h_{n+3}$  \\
            & &\tikzmark{g4t2}$(19)\ q_n q_{n+1} q_{n+4} h_1 h_{n+1}$\tikzmark{19}  &\tikzmark{20t1} $q_{n+1}^2 q_{n+3} h_1 h_n h_{n+3}$ \\
            & &\tikzmark{g4t3}$(23)\ q_n q_{n+2} q_{n+3} h_1 h_{n+3}$\tikzmark{23} &\tikzmark{25t1} $q_{n-1} q_{n+3}^2 h_1 h_{n-1} h_{n+2}$
            \\\hline
    $3n+4$  & & &\tikzmark{14t1}\tikzmark{19t1}$q_{n-1} q_{n+1} q_{n+4} h_1 h_{n-1} h_{n+1}$  \\
            & & &\tikzmark{14t2}\tikzmark{23t1}$q_{n-1} q_{n+2} q_{n+3} h_1 h_{n-1} h_{n+3}$ \\
            & & &\tikzmark{19t2}\tikzmark{23t2}$q_n q_{n+1} q_{n+3} h_1 h_{n+1} h_{n+3}$
    \\\hline
    $3n+3$  & & &  \\\hline
    $3n+2$  & & $(12)\ q_{n-1}^2 q_{n+4} h_1 h_{n+2}$\tikzmark{12} &   \\
            & & $(17)\ q_n q_{n+1}^2 h_1 h_{n+4}$\tikzmark{17} & \\\hline
    $3n+1$  & & &\tikzmark{17t1}$q_{n-1} q_{n+1}^2 h_1 h_{n-1} h_{n+4}$  \\\hline
    
    $3n$    & & & \tikzmark{12t1}$q_{n-2}^2 q_{n+4} h_1 h_{n-1} h_{n+2}$ \\
            & &$(10)\ q_{n-1}^2 q_{n+2} h_1 h_{n+4}$\tikzmark{10}&\tikzmark{12t2}$q_{n-1}^2 q_{n+2} h_1 h_{n+3}^2$ \\\hline

    $3n-1$  & & & \tikzmark{10t1}$q_{n-1}^2 q_{n+1} h_1 h_{n+1} h_{n+4}$ \\\hline
    $\cdots$ & & & \\\hline
    $2n+7$  &$q_1 q_{n+2} q_{n+4} h_n$\tikzmark{g3} &$(13)\ q_1 q_{n+2} q_{n+4} h_{n-1}^2$\tikzmark{13} &   \\
            & &$(24)\ q_1 q_{n+3}^2 h_n h_{n+2}$\tikzmark{24} &  \\\hline
    $2n+6$  & & &\tikzmark{13t1}$q_0q_{n+2}q_{n+4}h_0h_{n-1}^2$ \\
    & & &\tikzmark{13t2}$q_1q_{n+1}q_{n+4}h_{n-1}^2h_{n+2}$\\
    & &\tikzmark{g3t1}$(6)\ q_0 q_{n+2} q_{n+4} h_0 h_n$\tikzmark{6} &\tikzmark{13t3}$q_1q_{n+2}q_{n+3}h_{n-1}^2h_{n+3}$ \\
            & &\tikzmark{g3t2}$(22)\ q_1 q_{n+2} q_{n+3} h_n h_{n+3}$\tikzmark{22} & \tikzmark{24t1}$q_0 q_{n+3}^2 h_0 h_n h_{n+2}$  \\\hline
    $2n+5$  &$q_1 q_n q_{n+4} h_{n+2}$\tikzmark{g2} &$(18)\ q_1 q_n q_{n+4} h_{n+1}^2$\tikzmark{18} & \tikzmark{6t1}\tikzmark{22t1}$q_0q_{n+2}q_{n+3}h_0h_nh_{n+3}$   \\\hline
    $2n+4$  & &\tikzmark{g2t1}$(4)\ q_0 q_n q_{n+4} h_0 h_{n+2}$\tikzmark{4} &\tikzmark{18t1}$q_0 q_n q_{n+4} h_0 h_{n+1}^2$  \\
            & &\tikzmark{g2t2}$(11)\ q_1 q_{n-1} q_{n+4} h_{n-1} h_{n+2}$\tikzmark{11} &\tikzmark{18t2}$q_1 q_n q_{n+3} h_{n+1}^2 h_{n+3}$  \\\hline
    $2n+3$  &$q_1 q_n q_{n+2} h_{n+4}$\tikzmark{g1} &$(16)\ q_1 q_{n+1}^2 h_n h_{n+4}$\tikzmark{16} &\tikzmark{4t1}\tikzmark{11t1}$q_0 q_{n-1} q_{n+4} h_0 h_{n-1} h_{n+2}$ \\
            & &$(21)\ q_1 q_n q_{n+2} h_{n+3}^2$\tikzmark{21} & \\\hline
    $2n+2$  & &\tikzmark{g1t1}$(5)\ q_0 q_n q_{n+2} h_0 h_{n+4}$\tikzmark{5} &\tikzmark{16t1}$q_0 q_{n+1}^2 h_0 h_n h_{n+4}$  \\
            & &\tikzmark{g1t2}$(9)\ q_1 q_{n-1} q_{n+2} h_{n-1} h_{n+4}$\tikzmark{9}  &\tikzmark{21t1}$q_0 q_n q_{n+2} h_0 h_{n+3}^2$ \\
            & &\tikzmark{g1t3}$(15)\ q_1 q_n q_{n+1} h_{n+1} h_{n+4}$\tikzmark{15} &\tikzmark{21t2}$q_1 q_{n-1} q_{n+2} h_{n-1} h_{n+3}^2$  \\\hline
    $2n+1$  & & &\tikzmark{5t1}\tikzmark{9t1}$q_0 q_{n-1} q_{n+2} h_0 h_{n-1} h_{n+4}$ \\
            & & &\tikzmark{5t2}\tikzmark{15t1}$q_0 q_n q_{n+1} h_0 h_{n+1} h_{n+4}$ \\
            & & & \tikzmark{9t2}\tikzmark{15t2}$q_1 q_{n-1} q_{n+1} h_{n-1} h_{n+1} h_{n+4}$ \\\hline
    $2n$    & & & \\\hline
    $2n-1$  & & $(8)\ q_1 q_{n-1}^2 h_{n+2} h_{n+4}$\tikzmark{8} & \\\hline
    $2n-2$  & & &\tikzmark{8t1}$q_0 q_{n-1}^2 h_0 h_{n+2} h_{n+4}$ \\\hline
    \end{tabular}

    \begin{tikzpicture}[overlay, remember picture, shorten >=.5pt, shorten <=.5pt, transform canvas={yshift=.25\baselineskip}]

    \draw [->] ({pic cs:26}) [blue] to ({pic cs:26t1}); 
    \draw [->] ({pic cs:26}) [blue] to ({pic cs:26t2});
    
    \draw [->] ({pic cs:7}) [red] to ({pic cs:7t1});
    \draw [->] ({pic cs:7}) [red] to ({pic cs:7t2});
    \draw [->] ({pic cs:7}) [red] to ({pic cs:7t3});
   
    \draw [->] ({pic cs:20}) [red] to ({pic cs:20t1});
    
    \draw [->] ({pic cs:25}) [red] to ({pic cs:25t1});
    
    \draw [->] ({pic cs:14}) [red] to ({pic cs:14t1});
    \draw [->] ({pic cs:14}) [red] to ({pic cs:14t2});

    \draw [->] ({pic cs:19}) [red] to ({pic cs:19t1});
    \draw [->] ({pic cs:19}) [red] to ({pic cs:19t2});

    \draw [->] ({pic cs:23}) [red] to ({pic cs:23t1});
    \draw [->] ({pic cs:23}) [red,bend right=15] to ({pic cs:23t2});

    \draw [->] ({pic cs:12}) [blue] to ({pic cs:12t1});
    \draw [->] ({pic cs:12}) [blue,bend right=20] to ({pic cs:12t2});

    \draw [->] ({pic cs:17}) [red] to ({pic cs:17t1});

    \draw [->] ({pic cs:10}) [red] to ({pic cs:10t1});

    \draw [->] ({pic cs:13}) [red] to ({pic cs:13t1});
    \draw [->] ({pic cs:13}) [red] to ({pic cs:13t2});
    \draw [->] ({pic cs:13}) [red] to ({pic cs:13t3});

    \draw [->] ({pic cs:24}) [red] to ({pic cs:24t1});

    \draw [->] ({pic cs:6}) [red] to ({pic cs:6t1});

    \draw [->] ({pic cs:22}) [red] to ({pic cs:22t1});

    \draw [->] ({pic cs:18}) [red] to ({pic cs:18t1});
    \draw [->] ({pic cs:18}) [red] to ({pic cs:18t2});

    \draw [->] ({pic cs:4}) [red,bend left=20] to ({pic cs:4t1});

    \draw [->] ({pic cs:11}) [red] to ({pic cs:11t1});

    \draw [->] ({pic cs:16}) [red] to ({pic cs:16t1});

    \draw [->] ({pic cs:21}) [red] to ({pic cs:21t1});
    \draw [->] ({pic cs:21}) [red] to ({pic cs:21t2});

    \draw [->] ({pic cs:5}) [red,bend left=20] to ({pic cs:5t1});
    \draw [->] ({pic cs:5}) [red,bend left=10] to ({pic cs:5t2});
    
    \draw [->] ({pic cs:9}) [red,bend right=15] to ({pic cs:9t1});
    \draw [->] ({pic cs:9}) [red] to ({pic cs:9t2});
    
    \draw [->] ({pic cs:15}) [red] to ({pic cs:15t1});
    \draw [->] ({pic cs:15}) [red,bend right=20] to ({pic cs:15t2});

    \draw [->] ({pic cs:8}) [red] to ({pic cs:8t1});




    \draw [->] ({pic cs:g1}) [red] to ({pic cs:g1t1});
    \draw [->] ({pic cs:g1}) [red, bend right=10] to ({pic cs:g1t2});
    \draw [->] ({pic cs:g1}) [red,bend right=20] to ({pic cs:g1t3});
    
    \draw [->] ({pic cs:g2}) [red] to ({pic cs:g2t1});
    \draw [->] ({pic cs:g2}) [red] to ({pic cs:g2t2});

    \draw [->] ({pic cs:g3}) [red] to ({pic cs:g3t1});
    \draw [->] ({pic cs:g3}) [red,bend right=10] to ({pic cs:g3t2});

    \draw [->] ({pic cs:g4}) [red] to ({pic cs:g4t1});
    \draw [->] ({pic cs:g4}) [red,bend right=10] to ({pic cs:g4t2});
    \draw [->] ({pic cs:g4}) [red,bend right=20] to ({pic cs:g4t3});

    \end{tikzpicture}

}\end{center}
\end{figure}

    \begin{figure}[H]
\begin{center}
  {\renewcommand{\arraystretch}{1.1}
    \begin{tabular}{|wc{1cm}||wc{2.5cm}|p{5cm}|wc{4cm}|} \hline
     & $s=1$ & \multicolumn{1}{p{5cm}|}{\centering $s=2$} &  $s=3$ \\\hline\hline
    $n+4$   & & $(3)\ q_0^2 q_{n+4} h_n h_{n+2}$\tikzmark{3} & \\\hline
    $n+3$   & & & \\\hline
    $n+2$   & & $(2)\ q_0^2 q_{n+2} h_n h_{n+4}$\tikzmark{2} &\tikzmark{3t1}$q_0^2 q_{n+2} h_n h_{n+3}^2$ \\\hline
    $n+1$   & & & \\\hline
    $n$     & & $(1)\ q_0^2 q_n h_{n+2} h_{n+4}$\tikzmark{1} &\tikzmark{2t1}$q_0^2 q_n h_{n+1}^2 h_{n+4}$ \\\hline
    $n-1$   & & &\tikzmark{1t1}$q_0^2 q_{n-1} h_{n-1} h_{n+2} h_{n+4}$ \\\hline
  \end{tabular}

  \begin{tikzpicture}[overlay, remember picture, shorten >=.5pt, shorten <=.5pt, transform canvas={yshift=.25\baselineskip}]

    \draw [->] ({pic cs:3}) [blue] to ({pic cs:3t1});

    \draw [->] ({pic cs:2}) [blue] to ({pic cs:2t1});

    \draw [->] ({pic cs:1}) [red] to ({pic cs:1t1});

    


  \end{tikzpicture}}
\end{center}

\caption{The BXSS in degree $(s,k,t) = (2,3,2^{n+1}\cdot 21+1)$ for Lemma \cref{CESS filtration 3 die}, where $n\geq 4$.
The vertical axis is the BX filtration and the horizontal axis is the $s$-degree. Red arrows indicate $d_1$-differentials, and blue arrows indicate $d_2$-differentials.}
\label{fig:CESS filtration 3 die}
\end{figure} 
When $n$ is small, the BX filtration on the left-most column may repeat. But for $n\ge 4$, all the terms appearing in the table above are distinct and nontrivial.\\ 

In the case $n=3$, most terms and differentials remain the same except for the following:
\begin{itemize}
    \item $(25)\ d_2(q_3q_6^2h_1h_5) = d_2(q_3h_1\cdot q_6^2h_5 )=q_1h_2^2\cdot q_6^2h_5+q_3h_1\cdot q_5^2h_6h_5=q_1q_6^2h_2^2h_5\neq 0$
    \item $(14)\ q_2q_5q_7h_1h_2=0$
    \item $(19)\ d_1(q_3q_4q_7h_1h_4) = q_3q_4q_6h_1h_4h_6 \not=0$
    \item $(23)\ d_1(q_3q_5q_6h_1h_6) = q_3q_4q_6h_1h_4h_6\not=0$
    \item $(17)\ d_2(q_3q_{4}^2h_1h_{7}) = d_2(q_3h_1\cdot q_4^2h_7)=q_1h_2^2\cdot q_4^2h_7+q_3h_1\cdot q_3^2h_4h_7\not=0$
    \item $(12)\ d_2(q_2^2q_7h_1h_5) = q_2^2q_5h_1h_6^2 \not=0$
    \item $(13)\ d_1(q_1q_5q_7h_2^2) = q_1q_4q_7h_2^2h_5+q_1q_5q_6h_2^2h_6\not=0$
\end{itemize}

\begin{figure}[H]
\begin{center}
  {\renewcommand{\arraystretch}{1.1}
    \begin{tabular}{|c||c|p{5cm}|c|}\hline
      & $s=1$ & \multicolumn{1}{p{5cm}|}{\centering $s=2$} & $s=3$ \\\hline\hline
    $17$  & & $(26)\ q_{5} q_{6}^2 h_1 h_3$ \tikzmark{n26} &  \\\hline
    $16$  & & & \\\hline          
    $15$  & &$(7)\ q_3 q_{5} q_{7} h_0^2$ \tikzmark{n7} &\tikzmark{n26t2}$q_{5}^3 h_1 h_3 h_{6}$    \\
            & & $(20)\ q_{4}^2 q_{7} h_1 h_3$ \tikzmark{n20}& \tikzmark{n26t1}$q_3 q_{6}^2 h_1 h_{4}^2$ \\
            &$q_3 q_{5} q_{7} h_1$\tikzmark{ng4} &$(25)\ q_3 q_{n+3}^2 h_1 h_{5}$\tikzmark{n25}&   \\\hline
    $14$  & & &\tikzmark{n7t1}$q_{2} q_{5} q_{7} h_0^2 h_{2}$  \\
            & & &\tikzmark{n7t2} $q_3 q_{4} q_{7} h_0^2 h_{4}$  \\
            & & &\tikzmark{n7t3} $q_3 q_{5} q_{6} h_0^2 h_{6}$  \\
            & &\tikzmark{ng4t2}$(19)\ q_3 q_{4} q_{7} h_1 h_{4}$\tikzmark{n19}  &\tikzmark{n20t1} $q_{4}^2 q_{6} h_1 h_3 h_{6}$ \\
            & &\tikzmark{ng4t3}$(23)\ q_3 q_{5} q_{6} h_1 h_{6}$\tikzmark{n23} &
            \\\hline
    $13$  &$q_1 q_{5} q_{7} h_3$\tikzmark{ng3} &$(13)\ q_1 q_{5} q_{7} h_{2}^2$\tikzmark{n13} & \tikzmark{n25t1}$q_1q_6^2h_2^2h_5$   \\
            & &$(24)\ q_1 q_{6}^2 h_3 h_{5}$\tikzmark{n24} & \tikzmark{n19t2}\tikzmark{n23t2}$q_3 q_{4} q_{6} h_1 h_{4} h_{6}$
    \\\hline
    $12$  & & &\tikzmark{n13t2}$q_1q_{4}q_{7}h_{2}^2h_{5}$\\
            & &\tikzmark{ng3t1}$(6)\ q_0 q_{5} q_{7} h_0 h_3$\tikzmark{n6} &\tikzmark{n13t3}$q_1q_{5}q_{6}h_{2}^2h_{6}$ \\
            & &\tikzmark{ng3t2}$(22)\ q_1 q_{5} q_{6} h_3 h_{6}$\tikzmark{n22} & \tikzmark{n24t1}$q_0 q_{6}^2 h_0 h_3 h_{5}$  \\\hline
    $11$  
        & & $(12)\ q_{2}^2 q_{7} h_1 h_{5}$\tikzmark{n12} &   \\
            & & $(17)\ q_3 q_{4}^2 h_1 h_{7}$\tikzmark{n17} & \\
            &$q_1 q_3 q_{7} h_{5}$\tikzmark{ng2} &$(18)\ q_1 q_3 q_{7} h_{4}^2$\tikzmark{n18} & \tikzmark{n6t1}\tikzmark{n22t1}$q_0q_{5}q_{6}h_0h_3h_{6}$
            \\\hline
            
    $10$  & &\tikzmark{ng2t1}$(4)\ q_0 q_3 q_{7} h_0 h_{5}$\tikzmark{n4} &\tikzmark{n18t1}$q_0 q_3 q_{7} h_0 h_{4}^2$  \\
            & &\tikzmark{ng2t2}$(11)\ q_1 q_{2} q_{7} h_{2} h_{5}$\tikzmark{n11} &\tikzmark{n18t2}$q_1 q_3 q_{6} h_{4}^2 h_{6}$ \\\hline
    
    $9$   
            & &$(10)\ q_{2}^2 q_{5} h_1 h_{7}$\tikzmark{n10}&\tikzmark{n12t2}$q_{2}^2 q_{5} h_1 h_{6}^2$ \\
            &$q_1 q_3 q_{5} h_{7}$\tikzmark{ng1} &$(16)\ q_1 q_{4}^2 h_3 h_{7}$\tikzmark{n16} & \tikzmark{n17t1}$q_1q_4^2h_2^2h_7+q_3^3h_1h_7$\\
            & &$(21)\ q_1 q_3 q_{5} h_{6}^2$\tikzmark{n21} & \tikzmark{n4t1}\tikzmark{n11t1}$q_0 q_{2} q_{7} h_0 h_{2} h_{5}$
            \\\hline

    \end{tabular}
    \begin{tikzpicture}[overlay, remember picture, shorten >=.5pt, shorten <=.5pt, transform canvas={yshift=.25\baselineskip}]

    \draw [->] ({pic cs:n26}) [blue] to ({pic cs:n26t1}); 
    \draw [->] ({pic cs:n26}) [blue] to ({pic cs:n26t2});
    
    \draw [->] ({pic cs:n7}) [red] to ({pic cs:n7t1});
    \draw [->] ({pic cs:n7}) [red] to ({pic cs:n7t2});
    \draw [->] ({pic cs:n7}) [red] to ({pic cs:n7t3});
   
    \draw [->] ({pic cs:n20}) [red] to ({pic cs:n20t1});
    
    \draw [->] ({pic cs:n25}) [blue] to ({pic cs:n25t1});
    

    \draw [->] ({pic cs:n19}) [red] to ({pic cs:n19t2});

    \draw [->] ({pic cs:n23}) [red,bend right=15] to ({pic cs:n23t2});

    \draw [->] ({pic cs:n12}) [blue] to ({pic cs:n12t2});

    \draw [->] ({pic cs:n17}) [blue] to ({pic cs:n17t1});


    \draw [->] ({pic cs:n13}) [red] to ({pic cs:n13t2});
    \draw [->] ({pic cs:n13}) [red] to ({pic cs:n13t3});

    \draw [->] ({pic cs:n24}) [red] to ({pic cs:n24t1});

    \draw [->] ({pic cs:n6}) [red] to ({pic cs:n6t1});

    \draw [->] ({pic cs:n22}) [red] to ({pic cs:n22t1});

    \draw [->] ({pic cs:n18}) [red] to ({pic cs:n18t1});
    \draw [->] ({pic cs:n18}) [red] to ({pic cs:n18t2});

    \draw [->] ({pic cs:n4}) [red] to ({pic cs:n4t1});

    \draw [->] ({pic cs:n11}) [red] to ({pic cs:n11t1});



    
    





    
    \draw [->] ({pic cs:ng2}) [red] to ({pic cs:ng2t1});
    \draw [->] ({pic cs:ng2}) [red] to ({pic cs:ng2t2});

    \draw [->] ({pic cs:ng3}) [red] to ({pic cs:ng3t1});
    \draw [->] ({pic cs:ng3}) [red,bend right=10] to ({pic cs:ng3t2});

    \draw [->] ({pic cs:ng4}) [red,bend right=10] to ({pic cs:ng4t2});
    \draw [->] ({pic cs:ng4}) [red,bend right=20] to ({pic cs:ng4t3});

    \end{tikzpicture}
}\end{center}

\end{figure}

\begin{figure}[H]
\begin{center}
  {\renewcommand{\arraystretch}{1.1}
    \begin{tabular}{|c||c|p{5cm}|c|}

        \hline
      & $s=1$ & \multicolumn{1}{p{5cm}|}{\centering $s=2$} & $s=3$ \\\hline\hline

        $9$   
            & &$(10)\ q_{2}^2 q_{5} h_1 h_{7}$\tikzmark{n10}&$q_{2}^2 q_{5} h_1 h_{6}^2$ \\
            &$q_1 q_3 q_{5} h_{7}$\tikzmark{ng1} &$(16)\ q_1 q_{4}^2 h_3 h_{7}$\tikzmark{n16} & $q_1q_4^2h_2^2h_7+q_3^3h_1h_7$\\
            & &$(21)\ q_1 q_3 q_{5} h_{6}^2$\tikzmark{n21} & $q_0 q_{2} q_{7} h_0 h_{2} h_{5}$
            \\\hline

        $8$  & & & \tikzmark{n10t1}$q_{2}^2 q_{4} h_1 h_{4} h_{7}$ \\
    & &\tikzmark{ng1t1}$(5)\ q_0 q_3 q_{5} h_0 h_{7}$\tikzmark{n5} &\tikzmark{n16t1}$q_0 q_{4}^2 h_0 h_3 h_{7}$  \\
            & &\tikzmark{ng1t2}$(9)\ q_1 q_{2} q_{5} h_{2} h_{7}$\tikzmark{n9}  &\tikzmark{n21t1}$q_0 q_3 q_{5} h_0 h_{6}^2$ \\
            & &\tikzmark{ng1t3}$(15)\ q_1 q_3 q_{4} h_{4} h_{7}$\tikzmark{n15} &\tikzmark{n21t2}$q_1 q_{2} q_{5} h_{2} h_{6}^2$\\\hline

    $7$  & & &\tikzmark{n5t1}\tikzmark{n9t1}$q_0 q_{2} q_{5} h_0 h_{2} h_{7}$ \\
            & & &\tikzmark{n5t2}\tikzmark{n15t1}$q_0 q_3 q_{4} h_0 h_{4} h_{7}$ \\
            & &$(3)\ q_0^2 q_{7} h_3 h_{5}$\tikzmark{n3} & \tikzmark{n9t2}\tikzmark{n15t2}$q_1 q_{2} q_{4} h_{2} h_{4} h_{7}$ \\\hline
    $6$    & & & \\\hline
    $5$  & & $(8)\ q_1 q_{2}^2 h_{5} h_{7}$\tikzmark{n8} & \\
        & & $(2)\ q_0^2 q_{5} h_3 h_{7}$\tikzmark{n2} &\tikzmark{n3t1}$q_0^2 q_{5} h_3 h_{6}^2$\\\hline
    $4$  & & &\tikzmark{n8t1}$q_0 q_{2}^2 h_0 h_{5} h_{7}$ \\\hline
    $3$     & & $(1)\ q_0^2 q_3 h_{5} h_{7}$\tikzmark{n1} &\tikzmark{n2t1}$q_0^2 q_3 h_{4}^2 h_{7}$ \\\hline
    $2$   & & &\tikzmark{n1t1}$q_0^2 q_{2} h_{2} h_{5} h_{7}$ \\\hline
    
    \end{tabular}

    \begin{tikzpicture}[overlay, remember picture, shorten >=.5pt, shorten <=.5pt, transform canvas={yshift=.25\baselineskip}]

    
   
    
    





    \draw [->] ({pic cs:n10}) [red] to ({pic cs:n10t1});








    \draw [->] ({pic cs:n16}) [red] to ({pic cs:n16t1});

    \draw [->] ({pic cs:n21}) [red] to ({pic cs:n21t1});
    \draw [->] ({pic cs:n21}) [red] to ({pic cs:n21t2});

    \draw [->] ({pic cs:n5}) [red,bend left=20] to ({pic cs:n5t1});
    \draw [->] ({pic cs:n5}) [red,bend left=10] to ({pic cs:n5t2});
    
    \draw [->] ({pic cs:n9}) [red,bend right=15] to ({pic cs:n9t1});
    \draw [->] ({pic cs:n9}) [red] to ({pic cs:n9t2});
    
    \draw [->] ({pic cs:n15}) [red] to ({pic cs:n15t1});
    \draw [->] ({pic cs:n15}) [red,bend right=20] to ({pic cs:n15t2});

    \draw [->] ({pic cs:n8}) [red] to ({pic cs:n8t1});

    \draw [->] ({pic cs:n3}) [blue] to ({pic cs:n3t1});

    \draw [->] ({pic cs:n2}) [blue] to ({pic cs:n2t1});

    \draw [->] ({pic cs:n1}) [red] to ({pic cs:n1t1});

    \draw [->] ({pic cs:ng1}) [red] to ({pic cs:ng1t1});
    \draw [->] ({pic cs:ng1}) [red, bend right=10] to ({pic cs:ng1t2});
    \draw [->] ({pic cs:ng1}) [red,bend right=20] to ({pic cs:ng1t3});
    



    \end{tikzpicture}

}\end{center}

\caption{The BXSS in degree $(s,k,t) = (2,3,2^{3+1}\cdot 21+1)$ for Lemma \cref{CESS filtration 3 die}.
The vertical axis is the BX-filtration and the horizontal axis is the $s$-degree. Red arrows indicate $d_1$-differentials, and blue arrows indicate $d_2$-differentials.}
\label{fig:CESS filtration 3 die n=3}
\end{figure}

Using Proposition~\ref{lem diff algBXSS} we can see that, all the nontrivial terms above appearing as the target of some $d_2$ supports trivial $d_1$. Furthermore, since they cannot be written as sums of terms divisible by any $q_kh_k$, they are not killed by $d_1$-differentials. Therefore, those elements are still nontrivial on the $E_2$-page of the BXSS.

Therefore, for $n\ge 3$, $\Ext_{\P}^{2,2^{n+1}\cdot 21+1}(\FF,\Ext_{\mathcal Q}^{3}(\FF,\FF))\cong 0$.

\end{proof}

\begin{lem}
    For $n\ge 3$, $\Ext_{\P}^{0,2^{n+1}\cdot 21+1}(\FF,\Ext_{\mathcal Q}^{5}(\FF,\FF))\cong 0$.
\end{lem}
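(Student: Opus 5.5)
The plan is to compute $\Ext^0_{\P}$ directly as the $\P$-primitives in $\FF[q_0,q_1,\cdots]$, or equivalently to read off the $s=0$ column of the BXSS. Since $\Ext^0_{\P}(\FF,M)$ is the subcomodule of primitives of $M$, it is enough to:
\begin{enumerate}
\item list all degree-$5$ monomials in the $q_i$ of internal degree $t=2^{n+1}\cdot 21+1$;
\item check that no nonzero linear combination of them is primitive under the coaction $\psi(q_n)=\sum_{i}\xi_{n-i}^{2^{i+1}}\otimes q_i$.
\end{enumerate}
In BXSS language: the $E_1$-page in degree $(0,5,t,*)$ is spanned by these monomials, and I will show that $d_1$ is injective on that span.

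\emph{Step 1 (degree count).} A monomial $q_aq_bq_cq_dq_e$ has $t$-degree $\sum(2^{x+1}-1)$. So we need
\[ 2^{a+1}+2^{b+1}+2^{c+1}+2^{d+1}+2^{e+1} = 2^{n+5}+2^{n+3}+2^{n+1}+2^2+2^1 .\]
For $n\ge 3$ the right-hand side has exactly five $1$'s in binary. A sum of $m$ powers of $2$ has binary digit sum at most $m$, with equality exactly when the powers are pairwise distinct. Since here $m=5$ equals the digit sum, the multiset $\{a+1,\dots,e+1\}$ must be precisely the set of binary digits. Hence the only monomial is
\[ q_0q_1q_nq_{n+2}q_{n+4}, \]
and the $E_1$-term is one-dimensional.

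\emph{Step 2 (non-primitivity).} I will show that this single monomial is not primitive. Expanding $\psi$ multiplicatively, the primitive condition fails as soon as one term of $\psi(q_0q_1q_nq_{n+2}q_{n+4})$ other than $1\otimes(\cdot)$ survives. Equivalently, by Proposition~\ref{lem diff algBXSS} with $k=1$ (so the bracket condition is vacuous), the BXSS $d_1$ is a sum of terms:
\[ d_1(q_0q_1q_nq_{n+2}q_{n+4}) = q_0q_1\bigl(q_{n-1}q_{n+2}q_{n+4}h_{n-1} + q_nq_{n+1}q_{n+4}h_{n+1} + q_nq_{n+2}q_{n+3}h_{n+3}\bigr) + q_0^2q_nq_{n+2}q_{n+4}h_0 .\]
The $q_0^2\cdots h_0$ term comes from $\xi_1^{2}\otimes q_0$ in $\psi(q_1)$.

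These four terms have distinct $q$-monomials, so they cannot cancel, and each $h_j$ is nonzero in $\Ext^1_{\P}$. Therefore the $d_1$ is nonzero, the class does not survive to $E_2$, and the primitives vanish, giving $\Ext^0_{\P}=0$ in this degree. For instance, the coefficient of $\xi_1^{2^n}\otimes q_0q_1q_{n-1}q_{n+2}q_{n+4}$ in $\psi$ is visibly $1$, which already suffices.

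The main obstacle is Step 1, namely making sure no other monomial hits the same degree. In the smallest case $n=3$ the indices are $0,1,3,5,7$, which are still distinct, so the binary argument applies uniformly for all $n\ge 3$.
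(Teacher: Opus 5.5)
Your proposal is correct and follows the paper's proof. The degree count isolates the single monomial $q_0q_1q_nq_{n+2}q_{n+4}$, and its nonzero $d_1$ (equivalently, its failure to be primitive under $\psi$) shows that this $s=0$ group vanishes. Your binary digit-sum argument for uniqueness, and the explicit coefficient of $\xi_1^{2^n}\otimes q_0q_1q_{n-1}q_{n+2}q_{n+4}$, spell out steps the paper only asserts.
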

\begin{proof}
    Let's consider elements in BXSS $E_1$ of degree $(0,5,2^{n+1}\cdot 21 +1,*)$: Such an element must be of the form $q_aq_bq_cq_dq_e$. Computing the $t$-degree, we get:
    \[ 2^{a+1} + 2^{b+1} + 2^{c+1} + 2^{d+1} + 2^{e+1} = 2^{n+5} + 2^{n+3} + 2^{n+1} + 2^2 + 2 \]
    This term must be $q_0q_1q_nq_{n+2}q_{n+4}$, which admits nontrivial differential
    \begin{align*}
        d_1(q_0q_1q_nq_{n+2}q_{n+4}) &=
    q_0^2q_nq_{n+2}q_{n+4}h_0+q_0q_1q_{n-1}q_{n+2}q_{n+4}h_{n-1}\\
    &+q_0q_1q_nq_{n+1}q_{n+4}h_{n+1}+q_0q_1q_nq_{n+2}q_{n+3}h_{n+3}\neq 0.
    \end{align*}
\end{proof}

Therefore we are able to show:

\begin{thm}
    \label{surviveCESS}
    For $n\ge 3$, the element $q_0\cdot x_{n-1}\in \Ext_\P(\FF, \Ext_{\mathcal Q}(\FF,\FF))$ survives to the $E_\infty$-page of the CESS.
\end{thm}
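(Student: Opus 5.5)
The class $q_0\cdot x_{n-1}$ sits in CESS tridegree $(s,k,t)=(5,1,2^{n+1}\cdot 21+1)$. Two things have to be checked: it is not hit by any differential, and it supports no nontrivial differential.

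\emph{Not a boundary.} A CESS differential $d_r\colon E_r^{s,k,t}\to E_r^{s+r,k-r+1,t}$ with target in $(5,1,t)$ must start in $(5-r,\,r,\,t)$ for some $r\ge 2$. So the possible sources are the groups $\Ext_{\P}^{3,t}(\FF,\Ext^2_{\mathcal Q})$, $\Ext_{\P}^{2,t}(\FF,\Ext^3_{\mathcal Q})$, $\Ext_{\P}^{1,t}(\FF,\Ext^4_{\mathcal Q})$ and $\Ext_{\P}^{0,t}(\FF,\Ext^5_{\mathcal Q})$.
\begin{itemize}
\item The two lemmas just proved show that the $r=3$ and $r=5$ sources vanish for $n\ge 3$.
\item For $r=4$, I would repeat the same bookkeeping in the BXSS. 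Sources are $q_aq_bq_cq_dh_i$ with $2^{a+1}+2^{b+1}+2^{c+1}+2^{d+1}+2^{i+1}=2^{n+5}+2^{n+3}+2^{n+1}+2^2+2$, together with a few terms in which an indecomposable of $\Ext^1_\P$ is replaced by $h_i$. Degree counting forces a short list, such as $q_0q_1q_nq_{n+2}h_{n+4}$ and its permutations. Proposition~\ref{lem diff algBXSS} then gives a $d_1$ on each, and one checks linear independence as before, adding corrections where targets coincide.
\item The $r=2$ case, the group $\Ext_\P^{3,t}(\FF,\Ext^2_{\mathcal Q})$, is the main obstacle. Here the $\Ext_\P$ factor lives on the $3$-line, so the candidates are $q_aq_b$ times products $h_ih_jh_k$ or $c_j$, subject to the relations recalled from \cite{wangExt3}. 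I would enumerate them by the $t$-degree equation with right-hand side $2^{n+5}+2^{n+3}+2^{n+1}+2^2$. For each candidate I would exhibit a nonzero $d_1$ or $d_2$ via Proposition~\ref{lem diff algBXSS}, or show it is hit, so that the BXSS abutment vanishes. Where that fails, a surviving class would need a further argument that its CESS $d_2$ does not hit $q_0x_{n-1}$. I would try first to show that $\Ext_\P^{3,t}(\FF,\Ext^2_{\mathcal Q})=0$ outright, with $n=3$ treated separately as in the previous lemmas because terms collide.
\end{itemize}

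\emph{No outgoing differential.} Differentials out of $(5,1,t)$ land in $(5+r,\,2-r,\,t)$. Since $k=1$, only $d_2$ could be nonzero, and its target has $k=2-r\le 0$. If the CESS is taken as starting at $E_2$, I would argue the class is in fact a product of permanent cycles. Indeed, $q_0$ is a permanent cycle: it detects $h_0$ and lies in the bottom row $s=0$, $k=1$, since $\psi(q_0)=1\otimes q_0$. Also $x_{n-1}\in\Ext_\P$ lies on the $k=0$ edge, where the edge map $\Ext_{\P}\to\Ext_{\mathcal A}$ makes every class a permanent cycle. Multiplicativity of the CESS then shows $q_0x_{n-1}$ is a permanent cycle.

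Combining the two parts with Theorem~\ref{surviveBXSS}, which gives that $q_0x_{n-1}$ is nonzero in $\Ext_\P(\FF,\Ext_{\mathcal Q})$, the class survives to $E_\infty$.
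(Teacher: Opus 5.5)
There is a genuine gap in your $r=2$ and $r=4$ cases. You never prove that $E_2^{3,2,t}$ and $E_2^{1,4,t}$ vanish (with $t=2^{n+1}\cdot 21+1$). The bookkeeping you propose for them is also set up in the wrong internal degree.

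A class $q_aq_bq_cq_dh_i$ has $t$-degree $2^{a+1}+2^{b+1}+2^{c+1}+2^{d+1}+2^{i+1}-4$. So your $r=4$ equation should have right-hand side $2^{n+5}+2^{n+3}+2^{n+1}+5$, not $\cdots+2^2+2$. Likewise, for $q_aq_b\cdot y$ with $y\in\Ext_\P^{3}$ the right-hand side is $\cdots+3$, not $\cdots+2^2$. Your sample candidate $q_0q_1q_nq_{n+2}h_{n+4}$ actually has $t=2^{n+1}\cdot 21+2$, so it does not lie in the relevant group at all. Carried out as written, your plan would compute the wrong groups. For $r=2$ you in any case end with only a fallback (``a surviving class would need a further argument''), not an argument.

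The missing idea is a parity observation, and it is the entire content of the paper's proof. The algebra $\P=\FF[\xi_1^2,\xi_2^2,\dots]$ is concentrated in even degrees, so $\Ext_\P^{s,t}(\FF,\FF)=0$ for $t$ odd, while each $q_i$ has $k+t=2^{i+1}$, which is even. Hence the BXSS $E_1$-page, and therefore the CESS $E_2$-page, vanishes whenever $k+t$ is odd.

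Since $t$ is odd, a source in tridegree $(5-r,r,t)$ can be nonzero only for $r$ odd, that is $r=3$ or $r=5$. These are exactly the two preceding lemmas. Once the degree equations for $r=2,4$ are written correctly, they have an even left-hand side and an odd right-hand side, so no enumeration is needed.

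The same parity shows that the only possible target of a differential leaving $q_0x_{n-1}$, namely $E_2^{7,0,t}=\Ext_\P^{7,t}$, is zero. So your multiplicativity argument that the class is a permanent cycle is correct but unnecessary.
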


\begin{proof}
    The $E_1$-page of BXSS vanishes when $k+t$ is odd, so the same for $E_2$-page of CESS. Therefore, if some element kills $q_0 x_{n-1}$, then it must be of degree $(2,3,2^{n+1}\cdot 21 + 1)$ or $(0,5,2^{n+1}\cdot 21 + 1)$. From the lemmas above, $E_2^{2,3,2^{n+1}\cdot 21+1}$ and $E_2^{0,5,2^{n+1}\cdot 21+1}$ are trivial, so we conclude that $q_0\cdot x_{n-1}$ survives to the $E_\infty$-page.

\end{proof}

Combining Theorem~\ref{surviveBXSS} and Theorem~\ref{surviveCESS}, we can see that, for $n\ge 3$, the element
\[h_0\cdot x_n\in \Ext_{\A}^{6,2^{j+1}\cdot 21+1}(\FF,\FF)\]
is nontrivial, which is our main theorem, Theorem~\ref{mainthm}.

\bibliographystyle{alpha}
\bibliography{bibliography}

\end{document}